\numberwithin{equation}{section}
\newcommand{\I}{\mathrm{i}}
\newcommand{\lb}{\left(}
\newcommand{\rb}{\right)}
\newcommand{\PD}{\partial}
\newcommand{\Beq}{\begin{equation}}
	\newcommand{\Eeq}{\end{equation}}
\newcommand{\beq}{\begin{equation*}}
	\newcommand{\eeq}{\end{equation*}}
\newcommand{\bal}{\begin{align}}
	\newcommand{\eal}{\end{align}}
\renewcommand{\O}{\Omega}
\newcommand{\g}{\gamma}
\newcommand{\n}{\nabla}
\newcommand{\B}{\beta}
\newcommand{\bp}{\begin{prob}}
	\newcommand{\ep}{\end{prob}}
\newcommand{\bpr}{\begin{proof}}
	\newcommand{\epr}{\end{proof}}
\newcommand{\tred}[1]{#1}
\newcommand{\bel}[1]{\begin{equation}\label{#1}}
	\newcommand{\ee}{\end{equation}}
\newtheorem{theorem}{Theorem}[section]
\newtheorem{corollary}[theorem]{Corollary}
\newtheorem{lemma}[theorem]{Lemma}
\theoremstyle{definition}
\newtheorem{definition}[theorem]{Definition}
\newtheorem{remark}[theorem]{Remark}
\newcommand{\D}{\mathrm{d}}
\newcommand{\Rb}{\mathbb{R}}
\newcommand{\Dc}{\mathcal{D}}
\newcommand{\Nc}{\mathcal{N}}
\newcommand{\A}{\alpha}
\newcommand{\vp}{\varphi}
\newcommand{\wt}{\widetilde}
\newcommand{\Cc}{\mathcal{C}}
\renewcommand{\O}{\Omega}
\newcommand{\seanc}[1]{\begin{quotation}\textbf{Sean's comment:\
}{\textit{#1}}\end{quotation}}
\title[Ray transform on Riemannian manifolds with conjugate points]{Ray transform of symmetric tensor fields on Riemannian manifolds with conjugate points}
\author[Holman and Krishnan]{Sean Holman$^*$ and Venkateswaran P.\ Krishnan$^\dagger$}
\address{$^*$Department of Mathematics, The University of Manchester,
Alan Turing Building, Oxford Rd, Manchester, M13 9PL, UK. {\tt Email:sean.holman@manchester.ac.uk}
}
\address{$^\dagger$TIFR Centre for Applicable Mathematics, PO Box 6503, GKVK PO, Sharada Nagar, Chikkabommasandra, Yelahanka New Town, Bangalore, Karnataka 560065, India. {\tt Email:vkrishnan@tifrbng.res.in}
}
\begin{document}

\begin{abstract}
    In this article, we study the microlocal properties of the geodesic ray transform of symmetric $m$-tensor fields on 2-dimensional Riemannian manifolds with boundary allowing the possibility of conjugate points. As is known from an earlier work on the geodesic ray transform of functions in the presence of conjugate points, the normal operator can be decomposed into a sum of a pseudodifferential operator ($\Psi$DO) and a finite number of Fourier integral operators (FIOs) under the assumption of no singular conjugate pairs along geodesics, which always holds in 2-dimensions. In this work, we use the method of stationary phase to explicitly compute the principal symbol of the $\Psi$DO and each of the FIO components of the normal operator acting on symmetric $m$-tensor fields. Next, we construct a parametrix recovering the solenoidal component of the tensor fields modulo FIOs, and prove a cancellation of singularities result, similar to an earlier result of Monard, Stefanov and Uhlmann for the case of geodesic ray transform of functions in 2-dimensions. We point out that this type of cancellation result is only possible in the 2-dimensional case.
\end{abstract}
\maketitle
\section{Introduction} 
Let $(M,g)$ be an $n$-dimensional Riemannian manifold with boundary. A problem of significant interest in seismology and geometric inverse problems is the unique determination of the metric $g$ from the knowledge of the boundary distance function $d_g$ on $\PD M\times \PD M$.  A region in the interior of the manifold with ``high metric'' will not be seen by the boundary distance function since it takes into account only the shortest paths connecting boundary points. Due to this reason, unique determination of $g$ from $d_g|_{\partial M \times \partial M}$ in general is not possible, and some additional restrictions on the metric $g$ are required.  One assumption is to restrict the metric under consideration to be \emph{simple}. By this we mean the boundary $\PD M$ is strictly convex and for any $x\in M$, $\exp_x: \exp_x^{-1}M\to M$ is a differomorphism.  A model $n$-dimensional simple manifold with boundary is a strictly convex domain $\O\subset \Rb^n$ with Euclidean metric. Another obstruction to uniqueness always present regardless of the restrictions on the metric $g$ results from the fact that any isometry of the manifold $M$ fixing the boundary will give the same boundary distance function. Due to this, one can determine the Riemannian metric (if at all possible) given the boundary distance function only up to diffeomorphisms fixing the boundary, in which case, we call such Riemannian manifolds boundary distance rigid. Michel conjectured that any simple Riemannian manifold is boundary distance rigid.  While this conjecture in its full generality remains open, significant progress has been made in the last few decades; see \cite{Mukhometov, Michel,Pestov-Uhlmann-Annals, SU-JAMS, SUV-Annals} for a few groundbreaking works in this direction. 
Linearization of the boundary distance function near a fixed simple metric \cite{sharafutdinov2012integral} or linearization of the lens data (which consists of the length of geodesics together with the scattering relation) in the general case \cite{SU-lens} 
leads to the integral geometry problem of determining a symmetric $2$-tensor field $f$ from the knowledge its geodesic ray transform $I_2f$. The geodesic ray transform associates to $f$ its integrals along geodesics connecting boundary points: 
\[
I_2f(\g)=\int\limits_0^{l(\g)} f_{ij}(\g(t)) \dot{\g}^{i}(t) \dot{\g}^{j}(t) \D t,
\]
where $\g:[0,l(\g)]\to M$ is a geodesic connecting boundary points with $l(\g)$ denoting the length of the geodesic. As is expected, this transform has an infinite dimensional kernel. Any symmetric $2$-tensor field of the form $\D v$ with $v|_{\PD M}=0$ is in the kernel. Here $\D$ denotes the symmetrized covariant derivative; see \eqref{CovariantDerivative} below. This non-injectivity should be expected due to the natural obstruction to uniqueness in the non-linear problem caused by isometries fixing the boundary mentioned above.  The natural conjecture is that on a simple Riemannian manifold $(M,g)$,  $If \equiv 0$ implies that $f=\D v$ for some $v$ with $v|_{\PD M}=0$; we say $f$ is potential in this case. 

 In the above problem, one can relax the assumption that the rank of the tensor field be $2$ and this leads to the following question. Let $f$ be a symmetric $m$-tensor field, written in local coordinates as 
 \[
 f=f_{i_1 \cdots i_m} \D x^{i_1}\cdots \D x^{i_m}.
 \]
 Define the ray transform of $f$ as 
 \[
I_mf (\g)=\int\limits_0^{l(\g)} f_{i_1 \cdots i_m}(\g(t)) \dot{\g}^{i_1}(t) \cdots \dot{\g}^{i_m}(t) \D t,
\]
where $\g:[0,l(\g)]\to M$ is a geodesic connecting boundary points. 
The natural conjecture is that $I_mf\equiv 0$ implies $f$ is potential, that is $f=\D v$ for some symmetric $(m-1)$-tensor field $v$ vanishing on the boundary. If this is true, we say that the ray transform is $s$-injective. This conjecture has been proven under curvature assumptions that include the Euclidean case \cite{Pestov-Sharafutdinov,sharafutdinov2012integral}, for the case of simple metrics when $m=0,1$ \cite{Mukhometov, Anikonov-Romanov}, and for the case of simple metrics in two dimensions for $m=2$ in \cite{SharafutdinovJGA} and any $m$ \cite{paternain2013tensor}. For tensor fields of rank $m\geq 2$ in dimension $n\geq 3$, this problem in its full generality remains open even for simple Riemannian manifolds. Nevertheless, important progress has been made in the last few decades; see \cite{Pestov-Sharafutdinov, SU-RayT-Stability, SU-JAMS, dairbekov2006integral, Uhlmann-Vasy,SUV-Localtensors, SUV-Annals,sharafutdinov1997integral,sharafutdinov2002problem,stefanov2008integral,guillarmou2017lens}. For example, in \cite{Pestov-Sharafutdinov}, $s$-injectivity was proven under a curvature assumption based on Mukhometov's energy estimates, now formalized as Pestov identities. In \cite{SU-RayT-Stability, SU-JAMS}, the transform $I_2$ acting on symmetric 2-tensor fields was studied from a microlocal analysis point of view. In particular, it was shown that on simple Riemannian manifolds, the transform $I_2$ is a pseudodifferential operator ($\Psi$DO) of order $-1$ elliptic on solenoidal tensors and a parametrix was constructed recovering the solenoidal component of the tensor field modulo smoothing terms. For the case of real-analytic simple metrics, using tools from analytic microlocal analysis, the integral geometry problem was settled (without any assumptions on curvature) in \cite{SU-JAMS}. A significant breakthrough result was achieved by Uhlmann and Vasy in \cite{Uhlmann-Vasy} where they proved the invertibility of the local geodesic ray transform of functions in manifolds of dimensions at least 3  near the boundary under a convexity assumption on the boundary using tools from Melrose scattering  $\Psi$DO calculus. Furthermore, under the assumption of a global foliation by
strictly convex hypersurfaces, the ray transform on functions was shown to be globally injective. These ideas were later extended to the study of ray transform of symmetric $2$-tensor fields in \cite{SUV-Localtensors} and used to solve the corresponding non-linear boundary rigidity problem, under certain convexity assumptions in \cite{SUV-Annals}. We also mention a remarkable work of Guillarmou, who showed that negatively curved manifolds with strictly convex boundaries, and more generally manifolds with hyperbolic trapped sets and no conjugate points are deformation lens rigid, in particular, boundary distance rigid. For a list of open problems in the area, see \cite{paternain2014tensor}, and for a detailed contemporary reference with related results, see \cite{paternain2023geometric}. 
 

One approach to understanding the geodesic ray transform, is through the study of the normal operator $\mathcal{N}_m = I^{*}_mI_m$, where $I^{*}_m$ is the formal $L^2$ adjoint of $I_m$. This has been the main object of study in several of the works mentioned in the previous paragraph. 
When there are no conjugate points, more precisely, when the manifold $(M,g)$ is simple, it is known that the normal operator is a pseudodifferential operator ($\Psi$DO) of order $-1$, elliptic on the solenoidal component $^s\!f$ of the symmetric $m$-tensor field $f$ (see \cite{stefanov2004stability} for the case $m=2$ and \cite{SSU} for any $m\geq 1$). 
Consequently, the wavefront set $\mbox{WF}(^s\!f)$ of the solenoidal component of $f$ can be recovered from the knowledge of the geodesic ray transform. 

In the presence of conjugate points, these assertions are no longer true. 
The normal operator is no longer a $\Psi$DO. In \cite{SU-folds}, the authors showed that for the case $m=0$, in the presence of only fold-type conjugate points, the normal operator acting on functions is a sum of a pseudodifferential operator ($\Psi$DO) and a Fourier integral operator (FIO), and they computed the principal symbol of both these operators. They also showed a cancellation of singularities result for the geodesic ray transform in 2-dimensions.  
In a follow-up work for the geodesic ray transform in 2-dimensions \cite{monard2015geodesic},  the restriction of fold-type conjugate points in \cite{SU-folds} was removed to allow all possible types of conjugate locus to arrive at a cancellation of singularities result similar to the one in \cite{SU-folds}. Cancellation of singularities in the presence of conjugate points for the attenuated geodesic ray transform was also considered in \cite{holman2018attenuated}. These works were subsequently generalized by the first author and Uhlmann in \cite{HU} with only the additional assumption of no singular conjugate points and they showed that the normal operator is a sum of a $\Psi$DO and several FIOs with each FIO corresponding to conjugate pairs of a given order. We note that all the works mentioned above are for weighted and non-weighted cases of the geodesic ray transform acting on functions (i.e. the case $m = 0$).

The primary purpose of this paper is to study the geodesic ray transform of symmetric $m$-tensor fields on 2-dimensional Riemannian manifolds in the presence of conjugate points. To the best of our knowledge, ours would be the first work addressing the microlocal analysis of the normal operator associated to the geodesic ray transform acting on symmetric tensor fields on Riemannian manifolds with conjugate points. 
We are interested in undertaking a microlocal study of the transform $I_m$ in this set-up, which is a natural follow-up to \cite{HU}, although that work included no restriction on dimension. In contrast to \cite{HU}, we restrict the dimension of the Riemannian manifold to be $2$ for several reasons. First, due to the restriction on dimension, we place no restrictions on the type of conjugate points. We recall, in dimensions 3 and higher, \cite{HU} required the assumption of no singular conjugate pairs. However, singular conjugate pairs do not occur in 2-dimensions, and so this hypothesis is not necessary for us. Second, as already mentioned above, we explicitly compute the principal symbol of the $\Psi$DO and FIO parts of the operator $I_m^{*}I_m$ in this work, and our computation relies on the stationary phase method \cite{D}. In 2-dimensions, the computation of these symbols is easier as it is a finite sum, in comparison to higher dimensions, where the sum should be replaced by an integral. Finally, in 2-dimensions we can prove a result on cancellation of singularities result similar to \cite{monard2015geodesic}. We hope to address the analysis of the normal operator $I_m^{*}I_m$ acting on symmetric $m$-tensor fields in the presence of conjugate for higher dimensions in a future work.  

In Section \ref{sec:prelim}, we begin with preliminaries and then show in Subsection \ref{sec:Ndecomp} that a similar decomposition of the normal operator into $\Psi$DO and FIO parts to that in \cite{HU} holds for the tensor case. In Section \ref{sec:PDO} we compute explicitly the principal symbol of the $\Psi$DO and FIO components of $I^{*}_mI_m$. Finally, in Section \ref{sec:cancel} we construct a parametrix recovering the solenoidal component of $f$ modulo FIOs, and show a cancellation of singularities result, similar in spirit to the work of \cite{monard2015geodesic}. We emphasize that our cancellation of singularities result is only for interior singularities and we cannot address cancellation of singularities at boundary points with our current techniques. 




\section{Preliminaries} \label{sec:prelim}
\subsection{The geodesic ray transform}
In this section, we introduce some notation, definitions and results that will be used in this paper. Most of what we give in this section is standard (e.g. see \cite{sharafutdinov2012integral}).

Let $(M,g)$ be a compact 2-dimensional Riemannian manifold with boundary and volume form $\D V_g$. The tangent bundle of $M$ will be denoted by $TM$ with projection map $\pi:TM \rightarrow M$, and $SM\subset TM$ will denote the unit sphere bundle. For points $v \in TM$ we will sometimes use the notation $(x,v)$ which indicates that $x = \pi(v)$. For $v \in TM \setminus \{0\}$, we will write $\gamma_v$ for the maximally extended geodesic with initial condition $\dot{\gamma}_v(0) = v$. Note that this also includes the condition $\gamma_v(0) = \pi(v)$. The outward/inward pointing unit vector fields will be denoted by 
\[
\PD_{\pm}SM=\{v\in SM: 
\pi(v) \in \PD M, \pm\langle \nu,v\rangle_{g}>0\}
\]
where $\nu$ is the unit outer normal on the boundary $\partial M$. The set of smooth functions on $M$ will be denoted $C^\infty(M)$, functions compactly supported in the interior $M^{\rm int}$ of $M$ will be $C_c^\infty(M^{\rm int})$, and distributions on $M$ (the dual space of $C_c^\infty(M)$) will be $\mathcal{D}'(M)$. Distributions that are compactly supported within $M^{\rm int}$ will be denoted $\mathcal{E}'(M^{\rm int})$.

We assume that $(M,g)$ is non-trapping, in the sense that geodesics starting in $M$ will meet the boundary in finite time in both directions. We also assume that the boundary $\PD M$ of $M$ is strictly convex. By this, we mean that the second fundamental form of the boundary defined by 
\[
\Pi(v,v)=\langle \nabla_{v}\nu,v\rangle_{g} \mbox{ for } v\in T(\PD M)
\]
is positive definite. Here $\n$ is the covariant derivative induced by the Levi-Civita connection on $(M,g)$. Since the boundary is strictly convex, the functions $\tau_\pm:SM \rightarrow \mathbb{R}$ which map $v \in SM$ to the positive (resp. negative) time when $\gamma_v$ first intersects $\partial M$ are smooth on the interior of $SM$. \tred{We will also use the notation $D_t$ for the covariant differentiation of a vector field along a curve parameterized by $t$.}

Let $f$ be a symmetric covariant tensor field of rank $m$. In local coordinates $(x^1,\cdots, x^n)$, $f$ can be written as 
\[
f(x)=f_{i_1 \cdots i_m}(x) \D x^{i_1} \cdots \D x^{i_m},
\]
with Einstein summation convention assumed. We write $C^\infty(S^m\tau'_M)$, $C_c^\infty(S^m\tau'_M)$, $\mathcal{D}'(S^m \tau'_M)$, and $\mathcal{E}'(S^m \tau'_{M^{\rm int}})$ for the space of smooth, smooth compactly supported, distribution valued, and compactly supported distribution valued symmetric $m$ tensor fields, respectively. Smooth tensor fields can be considered as functions on the tangent space $TM$, and for this we will use the notation
\[
\langle f, v \rangle = f_{i_1 \cdots i_m} v^{i_1} \cdots v^{i_m}
\]
for any $v \in TM$. The tensor field $f$ being symmetric means that for any permutation $\sigma$ of the indices, 
\[
f_{i_{\sigma(1)} \cdots i_{\sigma(m)}}= f_{i_1 \cdots i_m}.
\]
We will use the usual musical operators to equate covectors and vectors as well as symmetric tensors with indices up and down. For example,
\[
f^{i_1\cdots i_m} = g^{i_1j_1} \cdots g^{i_mj_m} f_{i_1\cdots j_m},
\]
and
\[
\flat_g(v)_i =  v^{j} g_{ij}, \quad \sharp_g(u)^i = u_j g^{ij}.
\]
\tred{The Hodge star operator given by the metric $g$ will be denoted by $\ast_g$. In the 2-dimensional case, $\ast_g$ maps $1$-forms to  $1$-forms by the formula
\[
\ast_g (f)_i = \sqrt{\mathrm{det}(g)} \epsilon_{ji} g^{jl} f_l
\]
where $\epsilon$ is the alternating tensor. Given a covector $\xi \in T^* M$, the Hodge star gives a method to find a vector $\xi^\perp$ annihilated by $\xi$ through the formula $\xi^\perp = \sharp_g(\ast_g(\xi))$. This notation $\xi^\perp$ will be used below.}

We introduce the following pointwise product on tensors
\[
\langle f, h \rangle_g = f_{i_1 \cdots i_m} h^{i_1 \cdots i_m},
\]
and the corresponding $L^2$ inner product on the space of rank $m$ symmetric tensor fields 
\begin{equation}\label{eq:L2}
\langle\langle u,v\rangle\rangle_{g}=\int\limits_{M} \langle u, \overline{v} \rangle_g\ \D V_{g},
\end{equation}
The space $L^2(S^m\tau'_M)$ is the closure of $C^\infty(S^m\tau'_M)$ under the norm corresponding to this inner product. The covariant derivative of a tensor field is given in local coordinates by
\Beq\label{CovariantDerivative}
f_{i_1 \cdots i_{m};k}=\n_k f_{i_1 \cdots i_{m}}= \frac{\PD f_{i_1 \cdots i_m}}{\PD x^{k}}-\sum\limits_{p=1}^{m}\Gamma_{k i_{p}}^{r} u_{i_1 \cdots i_{p-1}r i_{p+1} \cdots i_{m}}
\Eeq
where $\Gamma_{ki}^r$ are the Christoffel symbols. The covariant derivative, and other differential operators introduced below, extend to $\mathcal{D}'(S^m\tau'_M)$ using weak derivatives. Then, the Sobolev spaces $H^k(S^m\tau'_M)$ for integer $k \geq 0$ are the subspaces of $f \in L^2(S^m\tau'_M)$ such that all components of $\nabla^l f$ are in $L^2$ in any coordinates with the natural inner product and norm. For non-integer $s >0$, $H^s(S^m\tau'_M)$ can be defined via interpolation (see \cite[Chapter 4]{taylor1996partial} for more information on the definition of Sobolev spaces).

We introduce an operation $d$ on the space of symmetric tensor fields, called the symmetrized covariant derivative. If $f$ is a smooth symmetric rank $m$ tensor field, then $\D f$ is a rank $m+1$ symmetric tensor field  defined to be the symmetrization of the covariant derivative $\n$. That is, $\D =\sigma \n$ where $\sigma$ is the symmetrization. With respect to the $L^2$ inner product defined in \eqref{eq:L2}, the formal $L^2$ adjoint of $d$ is equal to $-\delta$, where the divergence $\delta$ is defined by  
\[
(\delta f)_{i_1 \cdots i_{m-1}}=  g^{jk}f_{i_1 \cdots i_{m-1} j; k}.
\]
A tensor field $f$ is called {\it solenoidal} if $\delta f = 0$.

In the case of vector fields, $\delta$ is the standard divergence and for this case, the Helmholtz decomposition is classical. A similar decomposition for symmetric tensor fields has played a role in analysis of the tensor ray transform for decades \cite{sharafutdinov2012integral}. Sharafutdinov \cite[Theorem 3.3.2]{sharafutdinov2012integral} proved this decomposition for the case of symmetric tensor fields in $H^k(S^m \tau_M')$ when $k$ is an integer
at least $1$. Although the result in \cite[Theorem 3.3.2]{sharafutdinov2012integral} only explicitly includes $k \geq 1$, it is possible to extend to include $k = 0$ as well with little extra work \cite[Theorem 3.6]{Stefanov_Uhlmann_book}. We will now review the decomposition, sketching its proof.

Suppose that $f \in C^\infty(S^m \tau_M')$ for $k\geq 0$ and consider the boundary value problem
\begin{equation}\label{eq:usys}
\delta d u = \delta f, \quad u|_{\partial M} = 0.
\end{equation}
As demonstrated in \cite{sharafutdinov2012integral}, this is an elliptic boundary value problem which satisfies the Shapiro-Lopatinkii conditions (these are called the complementing condition in \cite{agmon1964estimates}; see also \cite[Proposition 11.9]{taylor1996partial}) and there is a unique solution $u \in C^\infty (S^{m-1} \tau_M')$. Therefore, for $k \geq 1$, the unique solution $u$ satisfies
\begin{equation}\label{eq:uest1}
\|u\|_{H^{k+1}(S^m\tau_M')} \leq C \|\delta d u\|_{H^{k-1}(S^m\tau_M')} = C \|\delta f \|_{H^{k-1}(S^m\tau_M')} \leq C \|f\|_{H^k(S^m\tau_M')}.
\end{equation}
Note that the standard Shapiro-Lopatinskii result gives the first step of this estimate only for $k \geq 1$. To obtain the same estimate for $k = 0$, we have
\[
\|du\|_{L^2(S^m\tau'_M)}^2 = \langle f , d u \rangle_{L^2(S^m\tau'_M)} \leq \|f\|_{L^2(S^m\tau'_M)} \|du \|_{L^2(S^m\tau'_M)} \Rightarrow \|du\|_{L^2(S^m\tau'_M)} \leq \|f\|_{L^2(S^m\tau'_M)}.
\]
The first equality above follows from the weak formulation of \eqref{eq:usys}. Using the equality
\begin{equation}\label{eq:P1}
\langle u(\gamma_v(t)), \dot{\gamma}_v(t) \rangle = \int_0^t \langle du(\gamma_v(s)), \dot{\gamma}_v(s) \rangle \ ds
\end{equation}
which holds for $v \in \partial_- SM$ since $u |_{\partial M} = 0$, we can establish a Poincar\'e-type inequality for $u$. Indeed, using Santal\'o's formula and \eqref{eq:P1}
\[
\begin{split}
\|u\|_{L^2(S^m\tau'_M)}^2 & \leq C \int_{SM} \langle u(x),v \rangle^2 \ dSM(x,v)  = \int_{\partial_- SM} \int_0^{\tau_+(v)} \langle u(\gamma_v(t)),\dot{\gamma}_v(t) \rangle^2 \ d t\  d \partial_- SM(v)\\
& 
\leq C\int_{\partial_- SM} \int_0^{\tau_+(v)} \left (\int_0^t \langle du(\gamma_v(s)), \dot{\gamma}_v(s) \rangle \ ds \right )^2 \ d t\  d \partial_- SM(v).
\end{split}
\]
By the Cauchy-Schwartz inequality and the fact that $\tau_+(v)$ is bounded by hypothesis, we then have
\[
\|u\|_{L^2(S^m\tau'_M)}^2 \leq C \int_{\partial_- SM} \int_0^{\tau_+(v)} \langle du(\gamma_v(s)), \dot{\gamma}_v(s) \rangle^2  \ d s\  d \partial_- SM(v)
\]
for a new constant $C$. Applying Santal\'o's formula a second time then proves 
\[
\|u\|_{L^2(S^m \tau'_M)} \leq C \| d u \|_{L^2(S^m \tau'_M)}.
\]
Note the constant $C$ may have changed from the previous step. Using the generalized Korn's inequality as well (\cite[Theorem 2.2]{pompe2003korn}), implies
\begin{equation}\label{eq:uest}
\|u\|_{H^{k+1}(S^m\tau'_M)} \leq C \|f\|_{H^k(S^m\tau_M')}
\end{equation}
for $k = 0$. Considering also \eqref{eq:uest1}, we have established \eqref{eq:uest} for all integers $k \geq 0$.

Given $u$ the unique solution of \eqref{eq:usys}, if we define
\[
^s\!f = f - du,
\]
then $^sf$ is solenoidal and we obtain the solenoidal decomposition
\[
f =  {^s}\!f + du.
\]
Note additionally by \eqref{eq:uest} that
\begin{equation}\label{eq:sfest}
\|^s\!f\|_{H^k(S^m\tau'_M)} \leq C \|f\|_{H^k(S^m\tau'_M)}
\end{equation}
for any $k \geq 0$. By continuity, we can extend the above construction to allow $f \in L^2(S^m\tau'_M)$, in which case $^s\!f$ and $du \in L^2(S^m\tau'_M)$ are projections onto the space of solenodal tensor fields and potential tensor fields, respectively. These spaces are orthogonal in $L^2(S^m\tau'_M)$, which leads to
\[
\|f\|^2_{L^2(S^m\tau'_M)} = \|^s\! f\|_{L^2(S^m\tau'_M)}^2 + \|du\|_{L^2(S^m\tau'_M)}^2.
\]
By interpolation, we also obtain \eqref{eq:uest} and \eqref{eq:sfest} with $k$ replaced by any $s\in \mathbb{R}$ with $s\geq 0$.

The arguments above result in the Helmholtz decomposition for symmetric tensor fields which we now record in a theorem.

\begin{theorem}\label{thm:Solenoidal}
    Let $\delta$ and $d$ be the differential operators defined above with domains $D(\delta) = L^2(S^m\tau'_M)$ and $D(d) = H^1_0(S^m\tau'_M)$. Then the space $L^2(S^{m}\tau_M')$ admits an orthogonal decomposition
    \[
    L^2(S^m \tau'_M) = \operatorname{Ker}(\delta)\oplus\operatorname{Ran}(d),
    \]
    known as the solenoidal-potential decomposition. If, $f \in H^s(S^m\tau'_M)$ for $s \geq 0$ and
    \[
    f =\, ^s\!f + du
    \]
    is the solenoidal-potential decomposition of $f$, then
    \[
    \|^s\! f\|_{H^s(S^m\tau'_M)} \leq C \| f \|_{H^s(S^m\tau'_M)}
    \]
    and
    \[
    \|u\|_{H^{s+1}(S^m\tau'_M)} \leq C \| f\|_{H^s(S^m\tau'_M)}
    \]
    for a constant $C>0$ independent of $f$.
\end{theorem}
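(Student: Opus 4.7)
The plan is to construct the decomposition by solving the boundary value problem \eqref{eq:usys}, as already outlined in the preceding discussion; the proof amounts to making that construction rigorous at every Sobolev level $s \geq 0$ and verifying orthogonality. I would first invoke the Shapiro-Lopatinskii theory from \cite{sharafutdinov2012integral, agmon1964estimates} to show that $\delta d u = \delta f$ with $u|_{\PD M} = 0$ is a regular elliptic boundary value problem on symmetric $(m-1)$-tensor fields with a unique solution $u$ and the standard a priori estimate \eqref{eq:uest1} for every integer $k \geq 1$.

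The genuinely delicate step is the base case $k=0$. Here I would first use the weak formulation of \eqref{eq:usys} tested against $u$ itself, together with the boundary condition, to obtain $\|du\|_{L^2(S^m\tau'_M)} \leq \|f\|_{L^2(S^m\tau'_M)}$. To bound $\|u\|_{L^2}$ by $\|du\|_{L^2}$, I would exploit the fundamental theorem of calculus identity \eqref{eq:P1} along unit-speed geodesics (which uses $u|_{\PD M} = 0$), apply Santaló's formula to pass between integrals over $SM$ and over $\PD_-SM$, and use the non-trapping hypothesis so that $\tau_+$ is bounded and Cauchy-Schwartz can be applied. This yields an $L^2$ bound on the pointwise pairing $\langle u(x), v \rangle$ integrated over the fiber; converting this into an $L^2$ bound on the full symmetric tensor $u(x)$ requires the generalized Korn-type inequality of \cite{pompe2003korn}, after which \eqref{eq:uest} at $k=0$ follows.

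With the estimates in hand, I would define ${}^s\!f := f - du$ and check $\delta({}^s\!f) = \delta f - \delta d u = 0$ directly from the equation. Orthogonality of $\operatorname{Ker}(\delta)$ and $\operatorname{Ran}(d)$ in $L^2$ then follows from integration by parts using $u|_{\PD M} = 0$: for any $v$ with $v|_{\PD M} = 0$ and any $w \in \operatorname{Ker}(\delta)$,
\begin{equation*}
\langle\langle w, d v \rangle\rangle_g = -\langle\langle \delta w, v \rangle\rangle_g = 0.
\end{equation*}
The Pythagorean identity $\|f\|_{L^2}^2 = \|{}^s\!f\|_{L^2}^2 + \|du\|_{L^2}^2$ then shows both projections are continuous on $L^2$, and by density the construction extends from $C^\infty$ to all of $L^2$. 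For non-integer $s \geq 0$, I would apply interpolation between the integer estimates already established, yielding both stated bounds for all $s \geq 0$.

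The main obstacle is the $k=0$ Poincaré-Korn step just described: since $d = \sigma \n$ is weaker than the full covariant derivative $\n$, controlling $u$ by $du$ in $L^2$ is not automatic and requires both a geometric Poincaré-type inequality (where non-trapping and strict convexity of $\PD M$ enter essentially, via Santaló) and Korn's inequality to upgrade from the pointwise pairing to the tensor norm. Every other piece of the theorem is either standard elliptic boundary-value theory, a direct integration by parts, or interpolation.
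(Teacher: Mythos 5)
Your proposal follows essentially the same route as the paper: Shapiro--Lopatinskii theory for the elliptic boundary value problem when $k \geq 1$, then the $k = 0$ base case via the weak formulation of \eqref{eq:usys}, the geodesic identity \eqref{eq:P1}, Santal\'o's formula with Cauchy--Schwartz and boundedness of $\tau_+$, and Korn's inequality; orthogonality from integration by parts with $u|_{\PD M}=0$; density to extend to $L^2$; and interpolation for non-integer $s$. The one inaccuracy is where you place Korn's inequality. The passage from the fiber integral $\int_{SM}\langle u(x),v\rangle^2\,\D SM(x,v)$ to $\|u\|_{L^2(S^m\tau'_M)}^2$ is pure fiberwise linear algebra: on the finite-dimensional space of symmetric $m$-tensors at a point, the quadratic form $u\mapsto\int_{S_xM}\langle u,v\rangle^2\,\D S_x(v)$ is positive definite (a symmetric tensor is determined by its associated homogeneous polynomial on the sphere), so it is equivalent to the tensor norm with constants uniform over the compact manifold --- no Korn is needed there. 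What \cite[Theorem 2.2]{pompe2003korn} actually supplies is the estimate $\|\nabla u\|_{L^2}\lesssim \|du\|_{L^2}+\|u\|_{L^2}$, i.e., control of the full covariant derivative by the symmetrized one, and this is precisely what upgrades the $L^2$-Poincar\'e bound $\|u\|_{L^2}\leq C\|du\|_{L^2}$ together with $\|du\|_{L^2}\leq\|f\|_{L^2}$ to the $H^1$ bound $\|u\|_{H^1}\leq C\|f\|_{L^2}$ required in \eqref{eq:uest} at $k=0$. Everything else in the plan is correct and matches the paper's argument.
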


\noindent We will write $\pi_S$ for the projection of $L^2(S^m\tau'_M)$ onto the solenoidal part as described in Theorem \ref{thm:Solenoidal}.


We now precisely define the ray transform of symmetric $m$-tensor fields as follows: It is initially defined on the space of smooth symmetric $m$-tensor fields
\[
I_m: C^{\infty}(S^{m}\tau_{M}')\to C^{\infty}(\PD_{+}SM)
\]
as 
\begin{equation}\label{eq:I}
I_mf(v)=\int\limits_{\tau_{-}(v)}^{0} \langle f(\gamma_v(t)), \dot{\gamma}_v(t)\rangle\, \D t.
\end{equation}
To define the backprojection operator, we introduce the mapping $F:SM \rightarrow \partial_+ SM$ given by\begin{equation}\label{eq:F}
F(v) = \dot{\gamma}_{v}(\tau_+(v)).
\end{equation}The backprojection operator $I_m^{*}: C^{\infty}(\PD_{+} SM)\to C^{\infty}(S^{m} \tau'_{M})$ is then defined in local coordinates by 
\begin{equation} \label{eq:I*}
(I_m^{*} h)_{i_1 \cdots i_m}(x)=\int\limits_{S_{x}M} h(F(v)) v_{i_1} \cdots v_{i_m} \D S_{x}(v).
\end{equation}
Note that $I_m^*$ is the formal adjoint of $I_m$ with respect to the $L^2$ inner product \eqref{eq:L2}. Additionally, both $I_m$ and $I_m^*$ can be extended as continuous maps on the respective $L^2$ spaces, and to compactly supported distribution spaces.

\subsection{Fourier integral operator and its principal symbol}

Our main results give the principal symbols of the pseudodifferential and Fourier integral parts of the operator $\mathcal{N}_m$. Given this, we now introduce Fourier integral operators (FIOs). As a first reference on the topic we recommend \cite{D}, although this only considers FIOs acting on scalars. Because $\mathcal{N}_m$ acts on tensor fields, we must also consider FIOs acting on tensor fields, or, more generally, Fourier integrals taking values in a vector bundle. For a general treatment of FIOs, which includes this case, see \cite{hormanderIV}.

Let $X$ and $Y$ be smooth manifolds of dimension $n_X$ and $n_Y$ respectively, and with non-vanishing densities $|\D v_X|$ and $|\D v_Y|$. Let $E_X$ and $E_Y$ be smooth vector bundles over $X$ and $Y$ respectively with $E_X$ having a fiberwise metric $\langle \cdot, \cdot \rangle_{E_X}$.  For our purposes, $E_X$ and $E_Y$ will be the symmetric tensor bundles $S^m \tau'_M$, but let us continue to consider the general case for now. Finally, let $\Cc \subset T^*X \setminus \{0\} \times T^* Y \setminus \{0\}$ be a canonical relation (see \cite{D} for the definition of canonical relation). An FIO of order $m$ is an operator $A: C^\infty_c(E_Y)\to \Dc'(E_X)$ such that the Schwartz kernel $K_{A}$ of $A$ can be locally represented in the form
\begin{equation} \label{eq:KAdef}
K_A(x,y)=\int_{\mathbb{R}^n} e^{\I \vp(x,y,\theta)}a(x,y,\theta) \D \theta\ |\D v_Y|^{1/2}\ |\D v_X|^{1/2},
\end{equation}
where $\vp(x,y,\theta)$ is a non-degenerate phase function which locally defines $\Cc$ and $a$ is an amplitude function of order $\mu = m + n/2-(n_X+n_Y)/4$ taking values in $\mathrm{Hom}(E_Y,E_X^*)$ ($E_X^*$ is the dual bundle to $E_X$). The technical terms in the previous sentence will be explained below.

\begin{remark}
This definition differs slightly from \cite[Section 25.2]{hormanderIV}, where FIOs acting on sections of vector bundles are identified with distributions in $\mathcal{D}'(X \times Y, \Omega_{1/2} \otimes \mathrm{Hom}(E_Y,E_X))$. However, it is not clear how to identify such distributions with operators from sections of $E_Y$ to sections of $E_X$ without some additional structure. We have chosen our convention, $a$ taking values in $\mathrm{Hom}(E_Y,E_X^*)$, to avoid this. Indeed, for $f \in C_c^\infty(E_Y)$ and $h \in C_c^\infty(E_X)$, we have
\[
\langle K_A, (h |\D v_Y|^{1/2}) \otimes f |\D v_X|^{1/2} \rangle = \int_X \int_Y \int_{\mathbb{R}^n} e^{i \varphi(x,y,\theta)} \langle a(x,y,\theta) f, h \rangle\ \D \theta|\D v_Y| |\D v_X|.
\]
where $\pi_X$ and $\pi_Y$ are projections onto the respective components. The relationship between $K_A$ and $A$ is given, for $f \in C_c^\infty(\Omega_{1/2}\otimes E_Y)$ and $h \in C_c^\infty(\Omega_{1/2}\otimes E_X)$, by
\[
\langle K_{A}, h \otimes f \rangle = \langle Af, h \rangle.
\]
These relationships are required to make sense of Definition \ref{def:prin}. 
\end{remark}

Non-degeneracy of the phase funciton $\vp$ means that it is a real-valued, homogeneous function of degree $1$ in $\theta$, $\D_{x,y,\theta}\vp\neq 0$ for all $(x,y,\theta) \in X\times Y \times \Rb^{n}\setminus \{0\}$ and at the set of points $(x,y,\theta)$ such that $\PD_{\theta} \vp(x,y,\theta)=0$, the set $\{ \frac{\PD}{\PD \theta_{j}}\lb \PD_{x,y,\theta} \vp\rb (x,y,\theta)\}$ for $1\leq j\leq n$ is linearly independent. The phase function $\vp$ defines $\Cc$ in the sense that,
locally on $\Cc$,
\begin{equation}\label{eq:Lamdef}
\Cc = \Big \{ (\D_x \vp(x,y,\theta),-\D_y \vp (x,y,\theta)) \in T^* X\setminus \{0\} \times T^* Y \setminus \{0\} \ : \ \D_\theta \vp (x,y,\theta) = 0 \Big \}.
\end{equation}
The function $a$ is called an amplitude function and it satisfies an amplitude estimate, depending on its order $\mu$, defined as formula 
\eqref{eq:ampest} below. Note that \eqref{eq:KAdef} is of the form
\begin{equation} \label{eq:Fint}
K_A(z) = \int_{\mathbb{R}^n} e^{i \vp(z,\theta)} a(z,\theta) \D \theta\ |\D v_Z|^{1/2},
\end{equation}
where $z = (x,y) \in X \times Y = Z$ and $|\D v_Z|$ is the product density. We call this type of integral a Fourier integral and will introduce the next few concepts for expressions of this form. The phase function $\vp$ in \eqref{eq:Fint} also defines a Lagrangian manifold by
\[
\Lambda = \Big \{ \D_z\varphi(z,\theta) \in T^* Z \setminus \{0\} \ : \ \D_\theta \varphi(z,\theta) = 0 \Big \}.
\]
Note that, when \eqref{eq:Fint} is given by an FIO as described above, then the canonical relation $\Cc$ given by \eqref{eq:Lamdef} is related to $\Lambda$ through multiplication by $-1$ in the $T^*Y$ component.

For a vector bundle $E_Z$ on $Z$, a function $a\in C^{\infty}(Z\times \Rb^n;E_Z)$ is said to be an amplitude function if for each compact $K\subset Z$ and multi-indices $\A, \B$, we have the following estimate: 
\begin{equation}\label{eq:ampest}
\lvert \PD_z^{\B} \PD_\theta^{\A} a(z,\theta)\rvert \leq C_{K,\A}(1+|\theta|)^{\mu-|\A|}
\end{equation}
for some real $\mu$. The norm on the left side of this estimate can be taken to be any fibrewise norm on the space $E_Z$ in local coordinates. The space of functions satisfying an estimate of the above form will be denoted $S^{\mu}(Z\times \Rb^n;E_Z)$ and $\mu$ is called the order of the amplitude function. For the application of FIOs, we will take $E_Z = \mathrm{Hom}(E_Y,E_X^*)$, or more specifically in our case $E_Z = \mathrm{Hom}(S^m\tau'_M,(S^m\tau'_M)^*)$. It is also possible to define amplitudes on a conic manifold taking values in a vector bundle. In particular, $S^\mu(\Lambda;\Omega_{1/2} \otimes M_\Lambda)$ is the space of amplitudes defined on $\Lambda$ with values in the product bundle of the set of half-densities $\Omega_{1/2}$ and the Keller-Maslov line bundle $M_\Lambda$ over $\Lambda$. For details of these definitions, see \cite{D}.



In the sections below, we use the method of stationary phase to compute the principal symbols of Fourier integrals, and therefore the principal symbol for corresponding FIOs. We now state the definition for the principal symbol of a Fourier integral. 
\begin{definition}\cite[Definition 4.1.1]{D} \label{def:prin}
    Let $Z\subset \Rb^{n_Z}$ be open and let $K_A$ be a Fourier integral of order $m$ defined by a non-degenerate phase function $\vp$ and an amplitude $a\in S^{m-(n/2)+(n_Z/4)}(Z\times \Rb^n;E_Z)$ as in \eqref{eq:Fint}. Let $\psi(z,\alpha):Z \times \Lambda \rightarrow \mathbb{R}$ be smooth, homogeneous of order $1$ in $\alpha$, and such that, for fixed $\alpha$, $\{d_x\psi(x,\alpha) \} \subset T^* Z$ intersects $\Lambda$ transversely at $\alpha$. Suppose $p_L: T_\alpha(T^*Z) \rightarrow \mathrm{Ker}(D \pi|_\alpha)$ is the projection along $T_\alpha \D\psi$ restricted to $T_\alpha\Lambda$, and $\omega$ is the volume form in $\mathrm{Ker}(D \pi|_\alpha)$. Finally, let $u \in C^\infty_c(Z;\Omega_{1/2} \otimes E_Z^*)$. The principal symbol of $K_A$ is the element in 
    \[
    S^{m+(n_Z/4)}(\Lambda, \O_{1/2}\otimes M_{\Lambda}\otimes E_Z)/S^{m+(n_Z/4)-1}(\Lambda, \O_{1/2}\otimes M_{\Lambda} \otimes E_Z)
    \]
    given by 
    \[
    E_Z^* \ni u(\pi(\alpha)) \to e^{\I \psi(\pi(\A),\A)}\langle K_A, u e^{-\I \psi(x,\A)}\rangle |p_L^* \omega|^{1/2}, 
    \]
    for $\A\in \Lambda$.
\end{definition}

When we apply Definition \ref{def:prin} to the case of FIOs, we identify the Lagrangian $\Lambda$ with the canonical relation $\Cc$ by changing the sign of the $T^*Y$ coordinate. Note that the principal symbol of an FIO will take values in the vector bundle $\Omega_{1/2}\otimes M_{\Cc} \otimes \mathrm{Hom}(E_Y,E_X^*)$ over $\Cc$.

\subsection{Geodesic ray transform as an FIO} \label{sec:Ndecomp}

It is well-known that $I_m$ and $I_m^*$ are Fourier integral operators of order $-1/2$. For the case $m=0$ (the scalar transform) this can be shown using the fact that the Schwartz kernel of $I_m$ is a weighted delta type distribution in the space $\partial_+ SM \times M$ \cite{monard2015geodesic}, and the same follows follows for each component of the tensor transform. We will discuss the canonical relations of $I_m$ and $I_m^*$ below (see \eqref{eq:CI} and \eqref{eq:CI*}), but first introduce some general considerations useful for analysis of the normal operator
\begin{equation}\label{eq:normal}
\mathcal{N}_m = I_m^* I_m.
\end{equation}
In \cite{HU}, the authors consider the push-forward and pull-back of half-densities by a mapping $G:X \rightarrow Y$ between manifolds with dimensions $n_X$ and $n_Y$, respectively. We will use a similar definition, but without the complication of half-densities.

\begin{definition}
    Let $G:X \rightarrow Y$ be a smooth submersion between manifolds which are given densities $|\D \mu_X|$ and $|\D \mu_Y|$. Then the push-forward $G_*:C_c^\infty(X) \rightarrow C_c^\infty(Y)$ and pull-back $G^*: C_c^\infty(Y) \rightarrow C^\infty(X)$ are defined by
    \[
    \int\limits_Y h(y)G_*[f](y) |\D \mu_Y(y)| = \int\limits_X h(G(x)) f(x) |\D \mu_X(x)| = \int\limits_X G^*[h](x) f(x) |\D \mu_X(x)|.
    \]
\end{definition}

The result \cite[Lemma 1]{HU} remains true in this situation and we have that $G_*$ and $G^*$ are both FIOs of order $(n_Y - n_X)/4$. We will require this result for the case of $F$ defined by \eqref{eq:F}.
For this mapping, we have $F_*$ and $F^*$ are FIOs of order $-1/4$ and canonical relations given by
\begin{equation}\label{eq:F_*}
\Cc_{F_*} = \left \{ (\xi, DF|_{v}^t\xi) \ : \ v \in SM, \ \xi \in T^*_{F(v)} \partial_+ SM \setminus\{0\}\right \}, 
\end{equation}
\begin{equation}\label{eq:F^*}
\Cc_{F^*} = \left \{ (DF|_{v}^t\xi,\xi) \ : \ v \in SM, \ \xi \in T^*_{F(v)} \partial_+ SM\setminus\{0\}\right \}.
\end{equation}
Unlike the situation in \cite{HU}, which considered the geodesic ray transform on scalar fields, the operators $I_m$ and $I^*_m$ cannot be written as the composition of a pull-back and a push-forward. However, we can modify the framework of \cite{HU} slightly by introducing the operator $\pi^*: C^\infty(S^m \tau'_M) \rightarrow C^\infty(SM)$ defined by
\begin{equation}\label{eq:pi^*}
    \pi^*[h](v) = \langle h, v \rangle
\end{equation}
with adjoint given, in local coordinates, by
\begin{equation}\label{eq:pi_*}
    \pi_*[f](x) = \int\limits_{S_x M} f(x,v) v_{i_1} \cdots v_{i_m} | \D S_xM(v)|,
\end{equation}
which maps $\pi_*:C^\infty(SM) \rightarrow C^\infty(S^m \tau'_M)$. 
We note that this is an abuse of notation since $\pi^*$ and $\pi_*$ defined by \eqref{eq:pi^*} and \eqref{eq:pi_*} are not the push-forward and pull-back of the projection $\pi:SM \rightarrow M$, although they play the same role that the push-forward and pull-back did in \cite{HU}.

Using Santal\'o's formula, we have, 
\begin{equation}\label{eq:II*}
I_m = F_* \circ \pi^*, \ I_m^* = \pi_* \circ F^*
\end{equation}
by a calculation very similar to that found in \cite{HU}. We would also like to be able to apply \cite[Lemma 1]{HU} to conclude $\pi_*$ and $\pi^*$ are FIOs, but it does not apply because these are not the push-forward and pull-back of a submersion. However, we have the identity
\[
\int\limits_{M} \langle h, \pi_*[f]\rangle_g \ \D V_g = \int\limits_{SM} \langle h, v\rangle f \ |\D SM| = \int\limits_{SM} \pi^*[h] f \ | \D SM|.
\]
If we take coordinates $\{x^1,x^2,\alpha\}$ on $SM$, where $\{x^1,x^2\}$ are coordinates on $M$, then if $f$ is supported within this coordinate neighborhood the integrals above are equal to
\begin{equation}\label{eq:FIOcoord}
\int\limits_{\mathbb{R}^3_{(x,\alpha)} \times \mathbb{R}^2_y\times \mathbb{R}^2_\xi} e^{i\eta \cdot (x-y)} h_{i_1 \cdots i_m}(y) v^{i_1}\cdots v^{i_m} f(x,\alpha) J(x,\alpha) \ \D x\D \alpha \D y \D \eta
\end{equation}
where $J(x,\alpha) \D x \D \alpha = |\D SM|$ in the coordinate domain. Note that $v$ in this equation depends on $\alpha$. From \eqref{eq:FIOcoord}, we see that
$\pi_*$ and $\pi^*$ are FIOs of order $-1/4$ with canonical relations
\begin{equation}\label{eq:Cpi_*}
\Cc_{\pi_{*}}=\Big{\{}(\eta; D\pi|^t_{w} \eta): w \in SM, \eta\in T^{*}_y M\setminus \{0\}\Big{\}},
\end{equation}
\begin{equation}\label{eq:Cpi^*}
\Cc_{\pi^{*}}=\Big{\{}(D\pi|^t_{w} \eta;\eta): w\in SM, \eta\in T^{*}_y M\setminus \{0\}\Big{\}},
\end{equation}
It is possible to use \eqref{eq:F_*}, \eqref{eq:F^*}, \eqref{eq:Cpi_*}, \eqref{eq:Cpi^*} and \eqref{eq:II*} to determine the canonical relation of $I_m$ and $I_m^*$ by composition. Indeed, this gives
\begin{equation}\label{eq:CI}
\Cc_{I_m} =  \left \{ (\xi;\eta) \ : \ w \in SM,\ \eta \in  T^{*}_y M\setminus\{0\}, \ D\pi|_{w}^t \eta = DF|_{F(w)}^t \xi \right \},
\end{equation}
\begin{equation}\label{eq:CI*}
\Cc_{I_m^*} =  \left \{ (\eta;\xi) \ : \ w \in SM,\ \eta \in  T^{*}_y M\setminus\{0\}, \ D\pi|_{w}^t \eta = DF|_{F(w)}^t \xi \right \}.
\end{equation}
These canonical relations can also be written as a normal bundle and are local diffeomorphisms \cite[Theorem 3.2]{monard2015geodesic}. 

The operator $I^*_m$ fails to satisfy the Bolker condition globally because geodesics passing normally to $\eta$ in either direction will give rise to different $\xi$ in \eqref{eq:CI*}. However, if we localize by taking a cut-off function $\varphi \in C_c^\infty(\partial_+SM)$ which is supported near a single point in $\partial_+SM$, then $I_m^* \varphi$ (by which we mean the operator localized on the right) does satisfy the Bolker condition \cite{GS}. In fact, since we consider the two dimensional case, the canonical relations $\Cc_{I_m}$ and $\Cc_{I_m^*}$ are local diffeomorphisms by \cite[Theorem 3.2]{monard2015geodesic}.  

The canonical relations \eqref{eq:Cpi_*} and \eqref{eq:Cpi^*} are the same as the canonical relations for the push-forward and pull-back of the projection map $\pi$ considered in \cite{HU}, and therefore the same analysis can be applied to the normal operator \eqref{eq:normal}. Indeed, since we are considering the two dimensional case when there are no singular conjugate pairs, and the same arguments which prove \cite[Theorem 4]{HU} show that
\begin{equation}\label{eq:Ndecomp}
\mathcal{N}_m = I_m^* I_m = \Psi_m + \sum_{j=1}^k A_{m,j}
\end{equation}
where $\Psi_m$ is a pseudodifferential operator and for each $j$, $A_{m,j}$ is an FIO with canonical relation given by one connected component of the set
\begin{equation}\label{Lj}
\begin{split}
\Cc_{J} & = \Bigg \{ \Big (\gamma(a),\flat_g(D_t J(a)); \gamma(b), \flat_g(D_t J(b))\Big ) \\ &\hskip2cm : \ \mbox{$J$ is a Jacobi field along $\gamma$ with $J(a) = J(b) = 0$} \Bigg \}.
\end{split}
\end{equation}

\section{Principal symbols of the pseudodifferential and Fourier integral components of the normal operator} \label{sec:PDO}

In this section, we will calculate the principal symbols of the diagonal and conjugate parts of the normal operator $\mathcal{N}_m$ using Definition \ref{def:prin}. From \eqref{eq:Ndecomp}, we see that $\mathcal{N}_m$ decomposes into a sum of pieces the first of which is a $\Psi$DO, 
while the other components are FIOs each with canonical relation given by a component of $\Cc_J$, which is given by \eqref{Lj}. The principal symbol of $\mathcal{N}_m$ at a point $
\alpha = (x_0,\xi;y_0,\eta)$ either in the diagonal $\Cc = \Delta$ for the $\Psi$DO part or in $\Cc = \Cc_J$ for the FIO parts, is given by the leading order term in the asymptotic expansion of 
\begin{equation}\label{psymb}
\phi(\pi(\alpha)) \mapsto e^{i \psi(x_0,y_0,\alpha)} \langle K_{\mathcal{N}_m},e^{-i\psi(x,y,\alpha)} \phi(x,y) \rangle |p_L^* \omega|^{1/2}
\end{equation}
where $K_{\mathcal{N}_m}$ is the Schwartz kernel of $\mathcal{N}_m$, $\psi \in C^\infty(M^{\rm int}\times M^{\rm int} \times \Cc)$ is homogeneous of order 1 in $\Cc$ and for fixed $\alpha$ has differential graph which intersects $\Lambda$ transversally at $\alpha'$. Here $\Lambda$ and $\alpha'$ are given by
\[
\Lambda = \{(x,\xi;y,\eta) \ : \ (x,\xi;y,-\eta)\in \Cc \}, \ \alpha' = (x_0,\xi;y_0,-\eta).
\]
Also, $\phi$ is a half density on $M^{\mathrm{int}} \times M^{\mathrm{int}}$ with values in $S^m\tau'_M \otimes S^m \tau'_M$ and $|p_L^* \omega|^{1/2}$ is from Definition \ref{def:prin}.

We first compute the principal symbol of the $\Psi$DO part of the operator $\Nc_m$. This is given by the following theorem. 
\begin{theorem}\label{Thm:PS:PsiDO} The principal symbol 
$\sigma_m$ of the operator $\Psi_m$ in \eqref{eq:Ndecomp} is
\Beq\label{PS:PsiDO}
\begin{aligned}
   \sigma_m^{i_1 \cdots i_m j_1 \cdots j_m}(x_0,\xi, x_0,\xi)&\\
   &\hskip-2cm = \frac{(2\pi )^2 i\lb \xi^{\perp}_{\tred{0}}\rb 
   ^{i_1} \cdots \lb \xi^{\perp}_{\tred{0}}\rb^{i_m}\lb \xi^{\perp}_{\tred{0}}\rb^{j_1}\cdots \lb \xi^{\perp}_{\tred{0}}\rb^{j_m}}{|\xi|_\tred{g}}\lb \frac{1}{ \langle w_0, \nu (z_0) \rangle_g}+ \frac{1}{ \langle w_1, \nu (z_1) \rangle_g}\rb \tred{|\D x \D \xi|^{1/2}}, 
\end{aligned}
\Eeq
with \tred{$\xi^\perp_0 = \xi^\perp/|\xi^{\perp
}|_g$}, $z_0, z_1$ and $w_0, w_1$ given in \eqref{z0z1-Expressions} and \eqref{w0w1-Expressions}, respectively. The vector field $\nu$ is the unit outer normal at the boundary.
Furthermore, $\Psi_m$ is elliptic on solenoidal tensor fields, in the sense that, if $f$ is a symmetric $m$-tensor such that 
\[
\xi^{i} f_{i_1 \cdots i_{m-1}i}(\xi)=0 \mbox{ and } \sigma_m f=0, 
\]
then these imply $f = 0$. 
\end{theorem}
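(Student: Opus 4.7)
My plan is to apply Definition \ref{def:prin} with $\Cc = \Delta \subset T^*M \times T^*M$ (the diagonal canonical relation of $\Psi_m$) and compute the resulting oscillatory integral by the method of stationary phase. Fix $\alpha = (x_0,\xi;x_0,\xi) \in \Delta$, work in geodesic normal coordinates centered at $x_0$, and choose the reference phase
\[
\psi(x,y,\alpha) = \langle x-y,\xi\rangle + \frac{|\xi|_g}{2}|y|^2,
\]
whose differential graph meets the Lagrangian lift of $\Delta$ transversally at $\alpha$. The matching tensor-valued half-density test function is $\phi(x)\phi(y)|\D V_g(x)|^{1/2}|\D V_g(y)|^{1/2}\sigma(\D x^{i_1}\cdots\D x^{i_m})\sigma(\D y^{j_1}\cdots\D y^{j_m})$ for a bump $\phi\in C_c^\infty$; inserting this into \eqref{psymb} and using \eqref{eq:I}, \eqref{eq:I*} together with Santal\'o's formula reduces the quantity to the large-$|\xi|_g$ asymptotics of an oscillatory integral over $\partial_-SM \times [0,\tau_+]\times[0,\tau_+]$ whose amplitude carries the tensor factors $\langle\dot\gamma_v(t)^{\otimes m},\sigma(\D x^{i_1}\cdots\D x^{i_m})\rangle$ and $\langle\dot\gamma_v(s)^{\otimes m},\sigma(\D y^{j_1}\cdots\D y^{j_m})\rangle$.

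The critical-point set of the combined phase consists of triples $(t,s,v)$ with $\gamma_v(t)=\gamma_v(s)=x_0$ and $\dot\gamma_v(t)$ orthogonal to $\sharp_g(\xi)$. In dimension two this is precisely two points, corresponding to the two entry pairs $(z_0,w_0)$ and $(z_1,w_1)$ appearing in \eqref{z0z1-Expressions}--\eqref{w0w1-Expressions}. Near each branch, I use coordinates $(t,s,z,w)$ with $z$ arclength on $\partial M$ and $w$ the angle parameter on $S_zM$, so that $\langle v,\nu\rangle_g \D z\,\D w = |\D\partial_-SM|$. The identities at the critical point that hold in normal coordinates — $\gamma(t_0)=0$, $\ddot\gamma(t_0)=0$, $\langle\dot\gamma(t_0),\xi\rangle=0$ and $\langle\partial_w\gamma(t_0),\dot\gamma(t_0)\rangle=0$ — reduce $\det H\psi$ after elementary row/column operations to $-|\xi|_g^4$ times the square of the Wronskian of the normal components of the variation fields $\partial_z\gamma$ and $\partial_w\gamma$, which are Jacobi fields along $\gamma_v$. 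Jacobi's scalar equation makes this Wronskian constant in $t$; evaluating at $t=0$ using $b_w(0)=0$, $\dot b_w(0)=1$ and $b_z(0)=\langle\dot\gamma_v(0),\nu\rangle_g$ identifies it with $\langle w_j,\nu(z_j)\rangle_g$.

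Applying the standard stationary phase expansion at each critical point and assembling the contributions — the Hessian factor $|\det H\psi|^{-1/2}$, the Santal\'o weight built into $\D\partial_-SM$, the $|\D V_g|^{1/2}$ normal-coordinate half-densities and the $|p_L^*\omega|^{1/2}$ factor from Definition \ref{def:prin} — produces the scalar factor $1/\langle w_j,\nu(z_j)\rangle_g$ at each critical point together with a uniform prefactor $(2\pi)^2 i/|\xi|_g$ after tracking the standard $(2\pi)^{n/2}e^{i\pi\sigma/4}$ constant. The tensor amplitude simplifies at each critical point because $\dot\gamma_v(t_0)=\pm\xi^\perp_0$; both the $dx$-factors and the $dy$-factors contribute $m$ copies of $\dot\gamma_v(t_0)$, so the overall tensor has even total degree $2m$ and the sign washes out. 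Both critical branches then yield the identical tensorial factor $(\xi^\perp_0)^{i_1}\cdots(\xi^\perp_0)^{i_m}(\xi^\perp_0)^{j_1}\cdots(\xi^\perp_0)^{j_m}$, and summing the two geometric weights gives \eqref{PS:PsiDO}. The principal obstacle is the Hessian/Wronskian computation; the tensor piece is essentially a bookkeeping extension of the scalar analysis in \cite{HU,monard2015geodesic}.

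For the ellipticity statement, the key observation is that at a fixed nonzero $\xi$ in two dimensions the subspace of symmetric $m$-tensors satisfying $\xi^i f_{i_1\cdots i_{m-1}i}=0$ is one-dimensional: in the orthonormal dual frame $(\xi/|\xi|_g,\flat_g(\xi^\perp_0))$, the condition forces every component of $f$ involving an index in the $\xi$-direction to vanish, so $f$ is a scalar multiple of $\flat_g(\xi^\perp_0)_{i_1}\cdots\flat_g(\xi^\perp_0)_{i_m}$. The contraction $(\xi^\perp_0)^{j_1}\cdots(\xi^\perp_0)^{j_m}f_{j_1\cdots j_m}$ reads off exactly this scalar coefficient, and the scalar prefactor in \eqref{PS:PsiDO} is strictly nonzero since $\langle w_j,\nu(z_j)\rangle_g>0$ in the interior. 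Therefore $\sigma_m f=0$ together with solenoidality forces the coefficient, and hence $f$, to vanish.
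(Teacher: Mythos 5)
Your outline follows the paper's route almost exactly: normal coordinates, the quadratic regularizing phase, Santal\'o's formula, reduction of $\det H\psi$ to the Wronskian of the normal components of the variation fields, evaluation at $t=0$, and the observation that $\dot\gamma_v(t_0)=\pm\xi^\perp_0$ so that the sign drops out of the tensor block. All of that is correct and essentially matches the paper.

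The genuine gap is the signature of the Hessian. You write that the contribution ``produces \dots a uniform prefactor $(2\pi)^2 i/|\xi|_g$ after tracking the standard $(2\pi)^{n/2}e^{i\pi\sigma/4}$ constant,'' but you never compute $\sigma(H\psi)$, and this is not a routine bookkeeping step. The determinant calculation only tells you that $\det H\psi < 0$, so the signature is $\pm 2$, and the sign here is exactly what decides whether the symbol carries a factor of $+i$ or $-i$. The paper resolves this by \emph{not} fixing the quadratic coefficient to be $|\xi|/2$: it takes $h(\xi)=c|\xi|$ with $c$ a free large parameter, writes out the characteristic polynomial of $H\wt{\psi}$, and shows that for $c\gg 0$ the coefficient of $\lambda^2$ is positive while the constant term and the $\lambda^3$-coefficient are negative; Descartes' rule of signs then forces three positive and one negative eigenvalue, i.e.\ $\sigma=2$. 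Since the Hessian never degenerates as $c$ varies (the determinant stays nonzero), this pins the signature for every admissible $h$. Your choice of $h=|\xi|$ forfeits that freedom, and it is not clear how you would directly read off $\sigma$ from the $4\times 4$ Hessian at $c=1$ without a comparable argument. As it stands, the $i$ in \eqref{PS:PsiDO} is asserted rather than derived. You should either reinstate a free parameter in $h$ and reproduce the Descartes argument, or give a direct eigenvalue/Sylvester-type count for your fixed Hessian.

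On the ellipticity statement, your argument is a genuine (and arguably cleaner) alternative to the paper's. You observe that in two dimensions the solenoidal condition $\xi^i f_{i_1\cdots i_{m-1}i}=0$ at a fixed $\xi\neq 0$ forces $f$ into the one-dimensional span of $\flat_g(\xi^\perp_0)^{\otimes m}$, so the single contraction appearing in $\sigma_m f$ reads off the only surviving coefficient. The paper instead peels off indices one at a time by an induction. Both are fine; yours makes the two-dimensional rigidity transparent and is shorter. I would keep it.

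One smaller remark: you should double-check the Jacobian bookkeeping that produces the factor $1/\langle w_j,\nu(z_j)\rangle_g$. The Hessian contributes $|\det H\wt{\psi}|^{-1/2}\propto 1/(c\langle w_j,\nu(z_j)\rangle_g)$, while the Santal\'o Jacobian contributes a factor $\langle w_j,\nu(z_j)\rangle_g$ in the amplitude; as written your sketch does not explain why these do not cancel. The paper's normalization via the half-density $|p_L^*\omega|^{1/2}=c|\xi|\,|\D x\D\xi|^{1/2}$ and the handling of the Santal\'o weight needs to be tracked explicitly to land on \eqref{PS:PsiDO}; simply invoking ``the Santal\'o weight built into $\D\partial_-SM$'' does not make the accounting visible.
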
 

\tred{\begin{remark}
    Since $\Psi_m$ is a pseudodifferential operator, it is possible to identify $\Cc = \Delta$ with $T^* M$, parameterized by $(x,\xi)$, and divide \eqref{PS:PsiDO} by the half density corresponding to the identity operator to obtain a section of bundle $(S^m\tau'_M)^* \otimes (S^m \tau'_M)^*$ over $T^* M$. This is often referred to as the principal symbol of a pseudodifferential operator. Since the principal symbol of the identity operator is
    \[
    2\pi\I |\D x \D \xi|^{1/2},
    \]
    for our case we would obtain the alternate representation of the principal symbol
    \[
    \sigma_m^{i_1 \cdots i_m j_1 \cdots j_m}(x_0,\xi, x_0,\xi) = \frac{2\pi  \lb \xi^{\perp}_{\tred{0}}\rb 
   ^{i_1} \cdots \lb \xi^{\perp}_{\tred{0}}\rb^{i_m}\lb \xi^{\perp}_{\tred{0}}\rb^{j_1}\cdots \lb \xi^{\perp}_{\tred{0}}\rb^{j_m}}{|\xi|_g}\lb \frac{1}{ \langle w_0, \nu (z_0) \rangle_g}+ \frac{1}{ \langle w_1, \nu (z_1) \rangle_g}\rb.
    \]
\end{remark}}

\bpr
We work in a set of normal coordinates $\{ x^1,x^2\}$ with domain $U$. In these coordinates, we write $\alpha = (x_0,\xi;x_0,\xi)$ for points in $\Delta$ and we suppose without loss of generality that the coordinates $\{x^1,x^2\}$ are centered at $x_0$ and so $x_0 = 0$. Take any $\phi \in C^\infty_c(U)$ \tred{such that $\phi(x_0) = 1$} and let us define two symmetric $m$-tensor fields $f,g \in  C_c^\infty(U; S^{m}\tau'_U)$ by
\[
f(x,\alpha) =  \phi(x) e^{-i \langle x, \xi \rangle} \D x^{i_1}\ \cdots \D x^{i_m}|_{x}, \quad g(y,\alpha) = \phi(y) e^{i\left ( \langle y, \xi \rangle - \frac{h(\xi)}{2}|y|^2 \right )}\lb \D x^{j_1} \ \cdots \ \D x^{j_m}\rb |_{y}
\]
where the covectors and exponentials are defined using the same normal coordinates on $U$ and
\[
\D x^{i_1} \cdots \D x^{i_m}=\frac{1}{m!}\sum\limits_{\sigma\in \Sigma_m} \D x^{i_{\sigma(1)}}\otimes \cdots \otimes \D x^{i_{\sigma(m)}}.
\]
Here $\Sigma_m$ is the permutation group corresponding to permutation of \{$1,\cdots, m\}$. Note that each index $i_l$ is either $1$ or $2$ in our setup. Thus 
\begin{equation}\label{eq:fg1}
f(x,\alpha) \otimes g(y,\alpha) = \phi(x)\phi(y) e^{-i\left ( \langle x-y, \xi \rangle + \frac{h(\xi)}{2}|y|^2\right )}\lb \D x^{i_1} \ \cdots \ \D x^{i_m}\rb|_{x}  \otimes \lb \D x^{j_1} \ \cdots \ \D x^{j_m}\rb|_{y}.
\end{equation}
Then applying $K_{\mathcal{N}_m}$ to $(f |\D V_g|^{1/2}) \otimes (g |\D V_g|^{1/2})$ as in \eqref{psymb} we have
\begin{equation}\label{psiphi1}
\begin{aligned}
\psi(x,y,\alpha) & = \langle x - y, \xi \rangle + \frac{h(\xi)}{2} |y|^2, 
\\
\quad \phi(x,y) & = \phi(x)\phi(y)|\D V_g(x)|^{1/2} |\D V_g(y)|^{1/2} (\D x^{i_1} \ \cdots \ \D x^{i_m})|_{x} \otimes (\D x^{j_1} \ \cdots \ \D x^{j_m})|_{y}.
\end{aligned}
\end{equation}
Now with $\psi$ and $\phi$ as defined in \eqref{psiphi1}, 
we see that 
\begin{equation}\label{eq: Nsym11}
\begin{aligned}
 \langle K_{\mathcal{N}_{m}},e^{-i\psi(x,y,\alpha)} \phi(x,y) \rangle  & =  \int_{\partial_+SM} \left (\int_{\tau_-(v)}^{0} \langle f(\gamma_{v}(t),\alpha),\dot{\gamma}_\tred{v}\rangle  \ \D t \right ) \left (\int_{\tau_-(v)}^{0} \langle g(\gamma_{v}(s),\alpha),\dot{\gamma}_{\tred{v}}\rangle \ \D s \right ) \ |\D \partial_+ SM(v)|\\
 &=\iiint e^{-\I \lambda \wt{\psi}} \phi(\g_{v}(t)) \phi({\g_v}(s))\dot{\g}_{v}^{i_1}(t) \cdots \dot{\g}_{v}^{i_m}(t) \dot{\g}_{v}^{j_1}(s) \cdots \dot{\g}_{v}^{j_m}(s) \ \D t \D s  |\D \partial_+ SM(v)|,
\end{aligned}
\end{equation}
where 
\Beq \label{eq:PDOsp}
\begin{aligned}
\lambda = |\xi|, \quad \xi_0=\frac{\xi}{|\xi|},\quad 
\wt{\psi}(s,t,z,w,\A)= \langle \gamma_{z,w}(t) - \gamma_{z,w}(s),\xi_0 \rangle + \frac{h(\xi_0)}{2} |\gamma_{z,w}(s)|^2. 
\end{aligned}
\Eeq
Here we have used coordinates $(z,w)$ on $\partial_+ SM$, which will be specified \tred{in the next paragraph}, in place of $v$. From now, we will write $h$ for $h(\xi_0)$ to simplify the notation. We apply the method of stationary phase to \eqref{eq: Nsym11} to compute the principal symbol of $\mathcal{N}_m$. From \cite{HU}, we know that if $U$ is sufficiently small, then the only singular points of the phase function occur when $\gamma_v$ passes through $x_0$ orthogonally to $\xi$ and $\gamma_v(t) = \gamma_v(s) = x_0$. In two dimensions there are two such singular points corresponding to the two possible vectors through $x_0$ orthogonal to $\xi$.

Now \tred{we will specify the coordinates $(z,w)$ used in \eqref{eq:PDOsp}}. Suppose that $v_0 \in \partial_+ SM$ is such that $\gamma_{v_0}(t_0) = x_0$ passes through $x_0$ orthogonally to $\xi$. Let us take a coordinate $z$ on $\partial M$ near $\pi(v_0)$ such that $|\D z| = |\D \partial M|$ and a coordinate $w$ on $S_z M$ which is the angle with $v_0$ oriented so that the variation field $\langle D_t \partial_w\gamma_{v_0}(0), \mathcal{I}_{0,t_0} \xi\rangle >0$ where $\mathcal{I}_{0,t_0}$ is parallel translation along $\gamma_{v_0}$. Thus we have a coordinate system on $\partial_+ SM$ near $v_0$ such that $\langle v,\nu \rangle_g |\D z \D w | = |\D \partial_\tred{+} SM|$ where $\nu$ is the \tred{outward} pointing unit normal to the boundary. Assume also that $v_0$ is represented by coordinates $(z_0,w_0)$. 

\tred{Next, we will} calculate the Hessian of \tred{the phase function $\wt{\psi}$ from \eqref{eq:PDOsp}} at the critical point $(t_0,t_0,z_0,w_0)$, but first let us note the following identities which will simplify the calculation:
\begin{equation}\label{eq:ids11}
\gamma_{z_0,w_0}(t_0) = 0, \quad \ddot{\gamma}_{z_0,w_0}(t_0) = 0,\quad \langle \dot{\gamma}_{z_0,w_0}(t_0),\xi_0 \rangle = 0, \quad \langle \partial_w \gamma_{z_0,w_0}(t_0), \dot{\gamma}_{z_0,w_0}(t_0) \rangle = 0.
\end{equation}
The first two identities in \eqref{eq:ids11} follow because $\gamma_{z_0,w_0}$ is a geodesic in normal coordinates through the centre point which is the origin in the coordinate system, the third identity is because $\gamma_{z_0,w_0}$ passes orthogonally to $\xi_0$ and the fourth identity follows from the fact that $\partial_w \gamma_{z_0,w_0}$ is a Jacobi field along $\gamma_{z_0,w_0}$ orthogonal to the direction vector of the geodesic.

In the following formulae, we suppress the dependence of $\gamma$ on $(z,w)$ to simplify the notation. The first derivatives of the phase function in \eqref{eq:PDOsp} are
\begin{align*}
\wt{\psi}_t & = \langle \dot{\gamma}(t),\xi_0 \rangle, & \wt{\psi}_s & =  -\langle \dot {\gamma}(s),\xi_0 \rangle +  h\langle \dot{\gamma}(s),\gamma(s) \rangle\\
 \wt{\psi}_z & =  \langle \partial_z \gamma(t) - \partial_z \gamma(s),\xi_0 \rangle  +  h\langle \partial_z \gamma(s),\gamma(s) \rangle, & \wt{\psi}_w & =  \langle \partial_w \gamma(t) - \partial_w \gamma(s),\xi_0 \rangle  + h\langle \partial_w \gamma(s),\gamma(s) \rangle.
 \end{align*}
 Next we take the second derivatives and evaluate at the critical point $(t_0,t_0,z_0,w_0)$ taking advantage of the identities in \eqref{eq:ids11} to get the Hessian
\begin{equation}\label{eq:Hess0}
H\wt{\psi} = \left (
\begin{array}{cccc}
    0 & 0 & \langle \partial_z \dot{\gamma},\xi_0 \rangle & \langle \partial_w \dot{\gamma},\xi_0 \rangle \\
    0 & h & -\langle \partial_z \dot{\gamma},\xi_0 \rangle + h\langle \dot{\gamma}, \partial_z \gamma \rangle,&-\langle \partial_w \dot{\gamma},\xi_0\rangle \\
   \langle \partial_z \dot{\gamma},\xi_0 \rangle & -\langle \partial_z \dot{\gamma},\xi_0 \rangle + h\langle \dot{\gamma}, \partial_z \gamma \rangle &  h|\partial_z \gamma|^2 & h\langle \partial_z \gamma, \partial_w \gamma \rangle \\
   \langle \partial_w \dot{\gamma},\xi_0 \rangle & -\langle \partial_w \dot{\gamma},\xi_0 \rangle & h\langle \partial_z \gamma, \partial_w \gamma \rangle & h|\partial_w \gamma |^2
\end{array}
\right ).
\end{equation}
Our goal is to calculate the determinant of this Hessian. Using row and column operations in the first step to simplify we obtain, omitting a lengthy calculation,
\begin{align}\label{eq:Hess11}
\mathrm{det}(H\wt{\psi})  & = \det
\begin{pmatrix}
    0 & 0 & \langle \partial_z \dot{\gamma},\xi_0 \rangle & \langle \partial_w \dot{\gamma},\xi_0 \rangle \\
    0 & h &  h\langle \dot{\gamma}, \partial_z \gamma \rangle & 0 
    \\
    \langle \partial_z \dot{\gamma}, \xi_0\rangle &  h\langle \dot{\gamma}, \partial_z \gamma \rangle &  h|\partial_z \gamma|^2 &  h\langle \partial_z \gamma, \partial_w \gamma \rangle
    \\
    \langle \partial_w \dot{\gamma},\xi_0 \rangle & 0 &  h\langle \partial_z \gamma, \partial_w \gamma \rangle &  h|\partial_w \gamma |^2
\end{pmatrix}\\
\label{eq:Hess33}
& =h^2\begin{pmatrix}
\langle \partial_z \dot{\gamma},\xi_0 \rangle & \langle \partial_w \dot{\gamma},\xi_0 \rangle
\end{pmatrix}
\begin{pmatrix}
- |\partial_w \gamma|^2 & \langle \partial_z \gamma, \partial_w \gamma \rangle\\
\langle \partial_z \gamma, \partial_w \gamma \rangle & \langle \partial_z \gamma, \dot{\gamma} \rangle^2 - |\partial_z \gamma|^2
\end{pmatrix}
\begin{pmatrix}
\langle \partial_z \dot{\gamma},\xi_0 \rangle \\ \langle \partial_w \dot{\gamma},\xi_0 \rangle
\end{pmatrix}.
\end{align}
Given that the Christoffel symbols vanish at the central point in normal coordinates, this formula can be interpreted invariantly. Indeed, let us express the variational fields $\partial_z \gamma$ and $\partial_w \gamma$ with respect to the parallel frame $\dot{\gamma}$ and $\dot{\gamma}_\perp$ along $\gamma$ as
\begin{equation}\label{eq:J1}
J_z = \partial_z \gamma = a_z \dot{\gamma} + b_z \dot{\gamma}_\perp, \quad J_w = \partial_w \gamma = b_w \dot{\gamma}_\perp.
\end{equation}
Due to the vanishing Christoffel symbols at the critical point, we have
\[
\langle \partial_z \dot{\gamma},\xi_0 \rangle = \langle D_t J_z, \xi_0\rangle, \quad \langle \partial_w \dot{\gamma},\xi_0 \rangle = \langle D_t J_w,\xi_0\rangle
\]
and using $\xi_0 =  \flat_g(\dot{\gamma}_\perp)$ we get
\begin{equation} \label{eq:Jdot1}
\langle \partial_z \dot \gamma, \xi_0 \rangle =  \dot{b}_z, \quad \langle \partial_w \dot{\gamma},\xi_0 \rangle = \dot{b}_w. 
\end{equation}
Putting \eqref{eq:J1} and \eqref{eq:Jdot1} into \eqref{eq:Hess33} we obtain
\[
\begin{split}
\mathrm{det}(H\wt{\psi}) & =  h^2\left (
\begin{array}{cc}
\dot{b}_z & \dot{b}_w
\end{array}
\right )
\left (
\begin{array}{cc}
-b_w^2 & b_zb_w\\
b_zb_w & -b_z^2
\end{array}
\right )
\left (
\begin{array}{c}
\dot{b}_z \\ \dot{b}_w
\end{array}
\right )\\
& = - (\dot{b}_w b_z - \dot{b}_z b_w)^2.
\end{split}
\]
Note that the right hand side is the square of the Wronskian $W\{b_z,b_w\} = \dot{b}_w b_z - \dot{b}_z b_w$. Given that $J_z$ and $J_w$ are both Jacobi fields, we have that for $b = b_z$ or $b = b_w$
\begin{equation}\label{eq:scalcurve1}
\ddot{b} + \kappa b = 0
\end{equation}
where $\kappa$ is the curvature. Therefore the Wronskian is constant along the geodesic. At the boundary, when $t = 0$, the variation with respect to $w$ satisfies
\[
b_w(0) = \langle \partial_w \gamma(0), \dot{\gamma}_\perp \rangle_g = 0, \quad \dot{b}_w(0) = \langle {D}_t\partial_w \gamma(0), \dot{\gamma}_\perp \rangle_g = 1.
\]
Note the second identity follows because we have chosen $w$ to be the angle properly oriented. Therefore
\begin{equation}\label{eq:Wron1}
W\{b_z,b_w\} = b_z(0) = \langle \partial_z \gamma(0), \dot{\gamma}_\perp \rangle_g.
\end{equation}
\tred{Since} $\partial_z \gamma(0)$ is given by the coordinate vector $\frac{\partial}{\partial z}$ and $z$ is an arc length parameter in the boundary $\partial M$, $\partial_z \gamma(0)$ is a unit vector tangent to the boundary and so $b_z(0)$ is the cosine of the angle between \tred{$\dot{\gamma}_\perp$} and the boundary. This is the same as the cosine of the angle between $\dot{\gamma}$ and $\nu$ and so
\begin{equation} \label{eq:detHess1}
\mathrm{det}(H\wt{\psi}) = - h^2\langle \dot{\gamma}(0), \nu \rangle_g^2.
\end{equation}

Next let us compute the signature of $H\wt{\psi}$. Before we do this, let us write the matrix $H\wt{\psi}$ using \eqref{eq:J1}. We have 
\begin{equation*}
H\wt{\psi} = \left (
\begin{array}{cccc}
    0 & 0 &  \dot{b}_z & \dot{b}_w  \\
    0 & h & -\dot{b}_z+ ha_z &-\dot{b}_w  \\
   \dot{b}_z & -\dot{b}_z+ ha_z &  h\lb a_z^2 +b_z^2\rb  & hb_z b_w \\
   \dot{b}_w & -\dot{b}_w & hb_z b_w & hb_w^2
\end{array}
\right ).
\end{equation*}
Since $\det(H\wt{\psi})<0$, the only possibilities for the eigenvalues of $H\wt{\psi}$ are (a) three positive and one negative eigenvalue or (b) three negative and one positive eigenvalue. We will rule out (b) by the calculations below \tred{by computing some coefficients of} the characteristic polynomial of $H\wt{\psi}$ in the variable $\lambda$. We note that the constant coefficient, which corresponds to $\det(H\wt{\psi})$, is negative. Further, the coefficient of $\lambda$ which corresponds to the negative of $\mbox{trace}(H\wt{\psi})$ is negative as well, since the diagonal entries are all non-negative and $h\gg 0$ (to be chosen later). Let us compute the coefficient of $\lambda^2$: 
\Beq\label{eq:lambdacoeff}
\mbox{coeff. of }\lambda^2= -\dot{b}_z^2-\dot{b}_w^2-(ha_z-\dot{b}_z)^2-\dot{b}_w^2-h^2b_z^2 b_w^2+h^2b_w^2(a_z^2+b_z^2)+h^2(a_z^2+b_z^2)+h^2 b_w^2.
\Eeq
Until now, we have not used any specific form for $h$ other than the fact that it is homogeneous of order $1$. We now choose $h$ to be $h=c|\xi|$ for $c\gg0$. Then $h(\xi_0)=c$.
Simplifying \eqref{eq:lambdacoeff}, we obtain
\[
\begin{aligned}
\mbox{coeff. of }\lambda^2&= -2(\dot{b}_z^2+\dot{b}_w^2) +2ha_z\dot{b}_z +h^2(b_w^2a_z^2+b_z^2 +b_w^2)\\
&=-2(\dot{b}_z^2+\dot{b}_w^2) +2ca_z\dot{b}_z +c^2(b_w^2a_z^2+b_z^2 +b_w^2).
\end{aligned}
\]
 Choosing $c$ to be large enough, we can make the coefficient of $\lambda^2$ positive. \tred{Since the coefficient of $\lambda^4$ is 1, the coefficient of $\lambda^3$ is negative and the constant coefficient is negative,} by Descartes' rule of signs, the characteristic polynomial must have 3 positive and one negative root and therefore the signature, $\mbox{sgn}(H\wt{\psi})=2$.
 
\tred{Using the calculations above,} the leading term in the stationary phase expansion of \eqref{eq: Nsym11} corresponding to each critical point is 
\[
\tred{\frac{(2\pi)^{2} \I \dot{\g}^{i_1}_{z_0,w_0}(t_0)\cdots \dot{\g}^{i_m}_{z_0, w_0}(t_0)\dot{\g}^{j_1}_{z_0,w_0}(t_0)\cdots \dot{\g}^{j_m}_{z_0, w_0}(t_0)}{c|\xi|^2 \langle w_0, \nu \rangle_g}}.
\]
\tred{Following the calculations of \cite{lan1999operator}, we have $|p_L^* \omega|^{1/2} = c |\xi| |\D x \D \xi|^{1/2}$ (see also \eqref{eq:pLwcalc} and preceding formulae). Multiplying this by the previous formula gives the contribution of one of the two critical points corresponding to vectors orthogonal to $\xi$ at $x_0$ to the principal symbol. Combining the two contributions,} the principal symbol $\sigma(x_0,\xi, x_0, \xi)$ is given by 
\Beq
\begin{aligned} \label{PS:gamma}
   \tred{\sigma^{i_1 \cdots i_m j_1 \cdots j_m}
   (x_0,\xi, x_0,\xi)} & \tred{ = \frac{(2\pi)^{2}\I \dot{\g}^{i_1}_{z_0,w_0}(t_0)\cdots \dot{\g}^{i_m}_{z_0, w_0}(t_0)\dot{\g}^{j_1}_{z_0,w_0}(t_0)\cdots \dot{\g}^{j_m}_{z_0, w_0}(t_0)}{|\xi| \langle  w_0, \nu (z_0) \rangle_g}|\D x \D \xi|^{1/2}}\\
   &\tred{\hskip1cm +\frac{(2\pi)^{2} \I \dot{\g}^{i_1}_{z_1,w_1}(t_1)\cdots \dot{\g}^{i_m}_{z_1, w_1}(t_1)\dot{\g}^{j_1}_{z_1, w_1}(t_1)\cdots \dot{\g}^{j_m}_{z_1, w_1}(t_1)}{|\xi| \langle w_1, \nu (z_1) \rangle_g}|\D x \D \xi|^{1/2}}.
\end{aligned}
\Eeq
We can write the principal symbol above in terms of only $(x_0,\xi)$ as follows. We let $\xi^{\perp}$ to be the vector obtained by $90^{\circ}$ anticlockwise rotation of the covector $\xi$. \tred{This is written in an coordinate invariant way as
\[
\xi^\perp = \sharp_g(\ast_g \xi).
\]
}
Let $\xi^{\perp}_0$ be the unit vector $\xi^{\perp}_0 = \frac{\xi^{\perp}}{|\xi^{\perp}|_{g}}$. 
The geodesic starting at $(x_0,\xi^{\perp}_0)$ intersects the boundary at finite times $\tau_{\pm}(x_0,\xi^{\perp}_0)$. Letting $\g_{x_0,\xi_0^{\perp}}(t)$ be  this geodesic, we then have 
\Beq\label{z0z1-Expressions}
\g_{x_0,\xi^{\perp}_0}(\tau_{+}(x_0,\xi^{\perp}_0))=z_0, \quad \g_{x_0,\xi^{\perp}_0}(\tau_{-}(x_0,\xi^{\perp}_0))=z_1.
\Eeq
We also have 
\Beq\label{w0w1-Expressions}
w_0=-\dot{\g}_{x_0,\xi^{\perp}_0}(\tau_{+}(x_0,\xi_0^{\perp})) \quad \mbox{and}\quad w_1=\dot{\g}_{x_0,\xi^{\perp}_0}(\tau_{-}(x_0,\xi_0^{\perp})).
\Eeq
\tred{Using these notations, we see that \eqref{PS:gamma} becomes \eqref{PS:PsiDO}. This completes the proof of \eqref{PS:PsiDO} and shows how it depends on $(x_0,\xi)$.} 


Next, let us show that the principal symbol $\sigma$ computed above is elliptic on solenoidal tensor fields. By this we mean, the following: If $f$ is a symmetric tensor field such that $\sigma(x_0, \xi, x_0,\xi)f=0$ and $\xi^{i} f_{i_1 \cdots i_{m-1} i}(\xi)=0$, then $f\equiv 0$. Multiplying $\sigma f$ by $f$ and summing over all the indices, we get 
\[
\lb \xi^{\perp}\rb^{i_1} \cdots \lb \xi^{\perp}\rb ^{i_m} f_{i_1 \cdots i_m}=0.
\]
Furthermore from $\xi^{i} f_{i_1 \cdots i_{m-1} i}(\xi)=0$, we get, 
\[
\lb \xi^{\perp}\rb^{i_1} \cdots \lb \xi^{\perp}\rb^{i_{m-1}} \xi^{i_m} f_{i_1 \cdots i_m}(\xi)=0.
\]
The above two equalities combined with the fact that $\xi$ and $\xi^{\perp}$ form a basis for the tangent space at the $x_0$ gives that
\[
\lb \xi^{\perp}\rb^{i_1} \cdots \lb \xi^{\perp}\rb^{i_{m-1}} f_{i_1 \cdots i_{m-1} i}(\xi)=0 \mbox{ for } i = 1,2.
\]
Let us fix $i$ and name the $m-1$ symmetric tensor field $\wt{f}$ with $i$ fixed as
\[
\wt{f}_{i_1 \cdots i_{m-1}}=f_{i_1 \cdots i_{m-1}i}.
\]
Next we have, with the same $i$ fixed above, 
\[
\lb \xi^{\perp}\rb^{i_1} \cdots \lb \xi^{\perp}\rb^{i_{m-2}} \xi^{i_{m-1}} f_{i_1 \cdots i_{m-1} i}(\xi)=0.
\]
Proceeding as before, we arrive at 
\[
\lb \xi^{\perp}\rb^{i_1} \cdots \lb \xi^{\perp}\rb^{i_{m-2}} f_{i_1 \cdots i_{m-2} i j}(\xi)=0 \mbox{ for } i,j = 1,2.
\]
Inductively proceeding, we arrive at $f_{i_1 \cdots i_{m}}(\xi)=0$ for any fixed index $i_1, \cdots, i_m$. \tred{This establishes the assertion that $\sigma$ is elliptic on solenoidal tensor fields and therefore completes the proof of Theorem \ref{Thm:PS:PsiDO}.}
\epr

Next we compute the principal symbol of the FIO components of the operator $\Nc_{\tred{m}}$. 

\begin{theorem}\label{Thm:PS:FIO} The principal symbol 
$\wt{\sigma}_{\tred{m}}$ of the operator of each FIO component of \eqref{eq:Ndecomp} is
\Beq\label{PS:FIO}
\begin{aligned}
   \wt{\sigma}_{\tred{m}}^{i_1 \cdots i_m j_1 \cdots j_m}(x_0,\xi, y_0,\eta) &= \frac{(2\pi\tred{)^2} \I\lb \xi^{\perp}_{\tred{0}}\rb ^{i_1} \cdots \lb \xi^{\perp}_{\tred{0}}\rb^{i_m}\lb \eta^{\perp}_{\tred{0}}\rb^{j_1}\cdots \lb \eta^{\perp}_{\tred{0}}\rb^{j_m}}{|\xi|_\tred{g}}\\
   &\hskip4cm \times \lb \frac{1}{\langle w_0, \nu\tred{(z_0)}\rangle_{g}f(t_0,s_0)} + \frac{1}{\langle w_1, \nu\tred{(z_1)}\rangle_{g}f(t_1,s_1)}\rb   |\D y \D \xi|^{1/2}.
\end{aligned}
\Eeq
with $z_0, z_1$ and $w_0, w_1$ given in \eqref{z0z1-Expressions} and \eqref{w0w1-Expressions}, respectively, \tred{the values $t_0$ and $s_0$ are the geodesic distances between the points $z_0$ and $x_0$ and $z_0$ and $y_0$ respectively measured along the geodesic starting at $z_0$ in the direction $w_0$, and $t_1$ and $s_1$ are the same in the opposite direction. The covectors $\xi_0$ and $\eta_0$ are defined in \eqref{xieta}}. The vector field $\nu$ is the unit \tred{outer} normal at the boundary, and $f(t,s)$ is the solution to \eqref{eq:scalcurve2} below with \eqref{fConditions}.
\end{theorem}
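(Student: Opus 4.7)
The strategy closely mirrors the proof of Theorem \ref{Thm:PS:PsiDO}, with two essential differences: the two test tensor fields must now be localized near distinct points $x_0$ and $y_0$, and the stationary points of the resulting phase correspond to geodesic arcs joining $x_0$ to $y_0$ which realize the conjugate pair relation encoded in $\Cc_J$. Accordingly, I would work in a pair of disjoint normal coordinate charts, one centered at $x_0$ and one at $y_0$, and, in analogy with \eqref{eq:fg1}, take
\[
f(x,\alpha) = \phi(x)\, e^{-\I \langle x,\xi\rangle}\, (\D x^{i_1}\cdots \D x^{i_m})|_x, \quad g(y,\alpha) = \rho(y)\, e^{\I(\langle y,\eta\rangle - \frac{h(\eta)}{2}|y|^2)}\, (\D x^{j_1}\cdots \D x^{j_m})|_y,
\]
with $\phi,\rho$ bump functions equal to $1$ at $x_0,y_0$ respectively. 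Inserting into \eqref{psymb} and using Santal\'o's formula together with $I_m = F_* \circ \pi^*$ from \eqref{eq:II*} converts the pairing into a double oscillatory integral over $\partial_+SM\times [0,\tau_+]^2$ whose phase, at frequency $\lambda = |\xi|$, reads
\[
\wt{\psi}(t,s,z,w,\alpha) = \langle \g_{z,w}(t),\xi_0\rangle - \langle \g_{z,w}(s),\eta_0\rangle + \tfrac{h(\eta_0)}{2}|\g_{z,w}(s)|^2.
\]

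I would then identify the critical points: differentiation in $t$ and $s$ forces $\dot\g_v(t)$ to be $g$-orthogonal to $\xi$ and $\dot\g_v(s)$ to be $g$-orthogonal to $\eta$, while differentiation in $(z,w)$ forces $\g_v(t)=x_0$ and $\g_v(s)=y_0$. These are precisely the defining conditions of $\Cc_J$ at $(x_0,\xi;y_0,\eta)$, and are realized by exactly two critical points corresponding to the two orientations of the connecting geodesic, producing the two summands in \eqref{PS:FIO}.

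The main obstacle, and the step where the function $f(t,s)$ of \eqref{eq:scalcurve2}--\eqref{fConditions} enters, is the Hessian computation. Using the coordinates $(z,w)$ on $\partial_+SM$ of Theorem \ref{Thm:PS:PsiDO} and decomposing the variational Jacobi fields $\partial_z\g, \partial_w\g$ along the parallel frame $\{\dot\g,\dot\g_\perp\}$ as in \eqref{eq:J1}, I would carry out the same row/column reduction that took \eqref{eq:Hess11} into \eqref{eq:Hess33}. The crucial new feature is that the $(t,z)$, $(t,w)$, $(s,z)$, $(s,w)$ blocks of the Hessian now involve the normal Jacobi components $b_z(t),b_w(t)$ at time $t_0$ and $b_z(s),b_w(s)$ at the distinct time $s_0$, so in place of the Wronskian identity \eqref{eq:Wron1} one obtains a $2\times 2$ determinant of normal Jacobi values at both conjugate times. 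Because $x_0$ and $y_0$ are conjugate along the geodesic, the solution $f(t,s)$ of the scalar Jacobi equation satisfying the boundary-type conditions \eqref{fConditions} is exactly the scalar whose vanishing detects this conjugacy, and the reduction yields
\[
\det(H\wt{\psi}) = -\, h(\eta_0)^2\, \langle w_0,\nu(z_0)\rangle_g^2\, f(t_0,s_0)^2,
\]
so that $1/\sqrt{|\det H\wt{\psi}|}$ produces the factor $1/\bigl(\langle w_0,\nu(z_0)\rangle_g\, f(t_0,s_0)\bigr)$ in \eqref{PS:FIO}.

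Finally, choosing $h(\eta)=c|\eta|$ with $c$ sufficiently large and applying Descartes' rule of signs to the characteristic polynomial of $H\wt{\psi}$ as in the $\Psi$DO case shows the signature is again $2$, giving the factor $\I$ in \eqref{PS:FIO}. Multiplying the stationary phase expansion by the density $|p_L^*\omega|^{1/2}$ (computed via the method of \cite{lan1999operator} using the parameterization of $\Lambda$ by $(y,\xi)$ near the non-diagonal Lagrangian, which yields $|\D y\, \D\xi|^{1/2}$), summing the contributions of the two critical points, and rewriting the relevant boundary data $z_0,z_1,w_0,w_1$ and directions $\xi_0^\perp,\eta_0^\perp$ invariantly via \eqref{z0z1-Expressions}, \eqref{w0w1-Expressions}, and \eqref{xieta} produces exactly \eqref{PS:FIO}. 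The elliptic-on-solenoidals step is not needed here because the FIO components do not determine singularities of $\pi_S f$ on their own.
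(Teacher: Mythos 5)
Your proposal gets the overall architecture right (two normal coordinate patches, analogous phase and test tensors, stationary phase with two critical points, Descartes-rule signature argument), but the central Hessian step has a genuine gap.

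In the $\Psi$DO case, the pure-boundary second derivatives $\partial_{zz}\gamma, \partial_{zw}\gamma, \partial_{ww}\gamma$ pair with $\xi_0$ and $\eta_0$ at the \emph{same} time $t_0=s_0$ and with $\xi_0=\eta_0$, so those contributions cancel and the Hessian determinant collapses to a single term proportional to $h^2$. In the FIO case, $t_0\neq s_0$ and $\xi_0\neq\eta_0$, so the entries $e,f,g$ of the paper's abstract matrix \eqref{Hpsimatrixabstract} contain non-cancelling pieces like $\langle\partial_{zz}\gamma(t_0),\xi_0\rangle-\langle\partial_{zz}\gamma(s_0),\eta_0\rangle$, and the expansion in \eqref{eq:detHconj1} has four separate blocks with $h$-orders $h^0$, $h^1$, $h^2$. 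Thus $\det(H\wt\psi)$ does \emph{not} reduce to the clean expression $-h^2\langle w_0,\nu\rangle_g^2 f(t_0,s_0)^2$ you claim: you cannot carry out ``the same row/column reduction that took \eqref{eq:Hess11} into \eqref{eq:Hess33}'' because the identities that made that reduction work are specific to the diagonal.

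The paper circumvents this by never computing $\det(H\wt\psi)$ in closed form. Instead it invokes the $h$-invariance of the density $|p_L^*\omega|/|\det(H\wt\psi)|$ from \cite[Lemma 4.1.1]{D}, writes $|p_L^*\omega|=(h^2+ha+b)|\D\xi\D y|$, takes $h=c|\xi|$, and sends $c\to\infty$; the lower-order-in-$h$ terms of both numerator and denominator then drop out, and only the $h^2$ coefficient of $\det(H\wt\psi)$ survives in the ratio, which is the term that has the invariant Jacobi-field structure and produces $f(t_0,s_0)$. Your proposal makes no use of this invariance at all, and moreover asserts $|p_L^*\omega|^{1/2}=|\D y\D\xi|^{1/2}$ with no $h$-dependence, which is inconsistent with the paper's (and with \cite{lan1999operator}'s) computation. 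To repair your argument you would need to either (i) replace the claimed closed form for $\det(H\wt\psi)$ with the full four-term expansion and then argue the ratio simplifies in the large-$h$ limit, or (ii) observe that the principal symbol is independent of the auxiliary $h$ and hence may be read off from the leading $h^2$ coefficients of both $|p_L^*\omega|$ and $|\det(H\wt\psi)|$ — which is precisely what the paper does. Without one of these, the determinant claim is false as stated and the derivation of the $f(t_0,s_0)$ factor is unjustified.
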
 
\bpr
Let $\alpha = (x_0,\xi;y_0,\eta)$ be in $\tred{\Cc}_J$ defined by \eqref{Lj}. Then there is a geodesic $\g:[0,1]\to M$ passing through $x_0$ and $y_0$, say, $\g(t_0)=x_0$ and $\g(s_0)=y_0$ with $0<t_0<s_0<1$, and there is a Jacobi field $J$ along $\g$ vanishing at $t_0$ and $s_0$ such that $\flat_\tred{g}(D_tJ(t_0)) = \xi$ and $\flat_\tred{g}(D_tJ(s_0)) = \eta$. We choose normal coordinates \tred{in neighbourhoods $U_{x_0}$ and $U_{y_0}$ of} $x_0$ and $y_0$
\tred{and take any $\phi_{x_0} \in C^\infty_c(U_x)$ and $\phi_{y_0} \in C^\infty_c(U_y)$ such that $\phi_{x_0}(x_0) = 1$ and $\phi_{y_0}(y_0) = 1$. Using the normal coordinates, let us define two symmetric $m$-tensor fields $f$ and $g$ in $C_c^\infty(U_{x_0}; S^{m}\tau'_{U_{x_0}})$ and $C_c^\infty(U_{y_0}; S^{m}\tau'_{U_{y_0}})$ respectively by
\[
f(x,\alpha) =  \phi(x) e^{-i \langle x, \xi \rangle} \D x^{i_1}\ \cdots \D x^{i_m}|_{x}, \quad g(y,\alpha) = \phi(y) e^{i\left ( \langle y, \eta \rangle - \frac{h(\xi,\eta)}{2}|y|^2 \right )}\lb \D y^{j_1} \ \cdots \ \D y^{j_m}\rb |_{y}
\]
The choice of function $h$ will be made precise later, but is homogeneous of order $1$ in $\xi$ and $\eta$. Again using the two sets of coordinates we define}
\begin{equation}
\begin{aligned}\label{eq:FIOphase1}
    \tred{\quad \psi (x,y,\A)} & \tred{= \langle x,\xi\rangle -\langle y,\eta\rangle + \frac{1}{2}|y|^2 h(\xi,\eta),}\\\tred{
    \phi(x,y)} & 
    \tred{= \phi(x)\phi(y)|\D V_g(x)|^{1/2} |\D V_g(y)|^{1/2} (\D x^{i_1} \ \cdots \ \D x^{i_m})|_{x} \otimes (\D y^{j_1} \ \cdots \ \D y^{j_m})|_{y}.}
\end{aligned}
\end{equation}
\tred{To determine the principal symbol, w}e consider
\begin{equation}\label{eq:Nsym1}
\begin{split}
e^{i \psi(x_0,y_0,\alpha)} \langle K_{\mathcal{N}_\tred{m}},e^{-i\psi(x,y,\alpha)} \phi(x,y) \rangle  & =\\ 
& \hskip-4cm e^{i \psi(x_0,y_0,\alpha)} \int_{\partial_+SM} \left (\int_{\tred{\tau_-(v)}}^{\tred{0}} \tred{\langle} f(\gamma_{v}(t),\alpha)\tred{,\dot{\gamma}_v\rangle} \ \D t \right ) \left (\int_{\tred{\tau_-(v)}}^{\tred{0}} \tred{\langle} g(\gamma_{v}(s),\alpha)\tred{,\dot{\gamma}_v\rangle} \ \D s \right ) \ |\D \partial_- SM(v)|.
\end{split}
\end{equation}
As in the \tred{proof of Theorem \ref{Thm:PS:PsiDO}}, we will apply the method of stationary phase to \eqref{eq:Nsym1}. 
\tred{Following the same method,} analogous to \eqref{eq:PDOsp} \tred{we have} 
\begin{align}\label{xieta}
    \tred{\lambda = |\xi|, \quad \xi_0 = \frac{\xi}{|\xi|}, \quad \eta_0 = \frac{\eta}{|\xi|}, \quad 
    \wt{\psi}(s,t,z,w,\alpha)=\langle \g_{z,w}(t),\xi_0\rangle - \langle \g_{z,w}(s),\eta_0\rangle + \frac{1}{2}|\g_{z,w}(s)|^2 h(\xi_0,\eta_0).}
\end{align}
As before, $(t_0, s_0,z_0, w_0)$ is a critical point of this phase function \tred{$\wt{\psi}$} and we will calculate the Hessian of the phase function at this critical point. To simplify the notation, we will drop the subscript\tred{s} $z$ and $w$ from $\g$ but the reader should bear in mind that $\g$ depends on these two variables.

The first derivatives of the phase function are
\begin{align*}
& \tred{\wt{\psi}}_t=\langle \dot{\g}(t),\xi_\tred{0}\rangle, \quad \tred{\wt{\psi}}_{s}=-\langle \dot{\g}\tred{(s)},\eta_\tred{0}\rangle + h\langle \dot{\g}(s), \g(s)\rangle,\\
&\tred{\wt{\psi}}_z=\langle \PD_z \g(t),\xi_\tred{0}\rangle -\langle \PD_z \g(s),\eta_\tred{0}\rangle + h\langle \PD_z \g(s), \g(s)\rangle, \\
&\tred{\wt{\psi}_w}=\langle \PD_w \g(t),\xi_\tred{0}\rangle -\langle \PD_w \g(s),\eta_\tred{0}\rangle+ h\langle \PD_w \g(s), \g(s)\rangle.
\end{align*}
Next let us compute the Hessian matrix $H\tred{\wt{\psi}}$ at the critical point (that is at $t=t_0, s=s_0, z=z_0, w=w_0$)\tred{. For this we note that the identities \eqref{eq:ids11} still hold as well as
\[
\g(s_0) = 0, \quad \ddot{\g}(s_0) = 0, \quad \langle \dot{\g}_{z_0,w_0}(s_0),\eta_0 \rangle = 0, \quad \langle \partial_w \g_{z_0,w_0}(s_0),\dot{\g}_{z_0,w_0}(s_0) \rangle = 0.
\]
To make the following formulae manageable, we have suppressed the dependence on $\g$ on $s_0$ or $t_0$. It is possible to tell at which point $\g$ should be evaluated by context. Note also that $h$ is evaluated at $(\xi_0,\eta_0)$. Calculation shows}
\small
\Beq\label{Hpsimatrix}
\begin{split}
&H \tred{\wt{\psi}}=
\left (\begin{matrix}
0 & 0 \\
0& h \\
\langle\PD_z \dot{\g},\xi_\tred{0}\rangle & -\langle \PD_z \dot{\g},\eta_\tred{0}\rangle + h\langle \PD_z \g, \dot{\g}\rangle \\
\langle \PD_{w}\dot{\g},\xi_\tred{0}\rangle & -\langle \PD_w \dot{\g} ,\eta_\tred{0}\rangle
\end{matrix}
\right . \\
&\hskip1.5in \left .\begin{matrix}
\langle \PD_z \dot{\g},\xi_\tred{0}\rangle & \langle \PD_{w}\dot{\g},\xi_\tred{0}\rangle \\
-\langle \PD_z \dot{\g},\eta_\tred{0}\rangle + h\langle \PD_z \g, \dot{\g}\rangle & -\langle \PD_w \dot{\g} ,\eta_\tred{0}\rangle \\
\langle \PD_{zz} \g,\xi_\tred{0}\rangle -\langle \PD_{zz} \g,\eta_\tred{0}\rangle + h|\PD_z \g|^2 & \langle \PD_{zw} \g,\xi_\tred{0}\rangle - \langle \PD_{zw} \g,\eta_\tred{0}\rangle + h \langle \PD_z \g,\PD_w \g\rangle \\
\langle \PD_{zw} \g,\xi_\tred{0}\rangle - \langle \PD_{zw} \g,\eta_\tred{0}\rangle + h \langle \PD_z \g,\PD_w \g\rangle &\langle \PD_{ww} \g,\xi_\tred{0}\rangle - \langle \PD_{ww} \g,\eta_\tred{0}\rangle + h |\PD_w \g|^2
\end{matrix}
\right ).
\end{split}
\Eeq
\normalsize
To simplify the calculation of the determinant of $H\tred{\wt{\psi}}$, it is convenient to express the entries in the Hessian abstractly so that
\Beq\label{Hpsimatrixabstract}
H\tred{\wt{\psi}} = 
\begin{pmatrix}
    0 & 0 & a & b\\
    0 & h & c & d\\
    a & c & e & f\\
    b & d & f & g
\end{pmatrix}.
\Eeq
Expanding the determinant gives
\[
\mathrm{det}(H\tred{\wt{\psi}}) = h 
\begin{pmatrix}
    a & b
\end{pmatrix}
\begin{pmatrix}
    -g & f\\ f & -e
\end{pmatrix}
\begin{pmatrix}
    a \\ b
\end{pmatrix}
+ (ad-bc)^2.
\]
Substituting in the values of the entries of the matrix, and simplifying slightly, we have 
\small
\begin{equation}\label{eq:detHconj1}
\begin{split}
& \mathrm{det}(H\tred{\wt{\psi}}) = \\ 
&\hskip1.5cm h\begin{pmatrix}
    \langle \partial_z\dot{\g},\xi_\tred{0} \rangle & \langle \partial_w\dot{\gamma},\xi_\tred{0} \rangle
\end{pmatrix}
\begin{pmatrix}
    -\langle \partial_{ww}\gamma,\xi_\tred{0} \rangle + \langle \partial_{ww}\gamma,\eta_\tred{0} \rangle & \langle \partial_{zw}\gamma,\xi_\tred{0} \rangle - \langle \partial_{zw}\gamma,\eta_\tred{0} \rangle \\ \langle \partial_{zw}\gamma,\xi_\tred{0} \rangle - \langle \partial_{zw}\gamma,\eta_\tred{0} \rangle & -\langle \partial_{zz}\gamma,\xi_\tred{0} \rangle + \langle \partial_{zz}\gamma,\eta_\tred{0} \rangle
\end{pmatrix}
\begin{pmatrix}
    \langle \partial_z\dot{\gamma},\xi_\tred{0} \rangle \\ \langle \partial_w\dot{\gamma},\xi_\tred{0} \rangle
\end{pmatrix}\\
&\hskip1.7cm + h^2 \begin{pmatrix}
    \langle \partial_z\dot{\gamma},\xi_\tred{0} \rangle & \langle \partial_w\dot{\gamma},\xi_\tred{0} \rangle
\end{pmatrix}
\begin{pmatrix}
    -|\partial_w\gamma|^2 & \langle \partial_z \gamma,\partial_w \gamma \rangle \\ \langle \partial_z \gamma,\partial_w \gamma \rangle &  \langle \partial_z \gamma, \dot{\gamma} \rangle^2 -|\partial_z\gamma|^2
\end{pmatrix}
\begin{pmatrix}
    \langle \partial_z\dot{\gamma},\xi_\tred{0} \rangle \\ \langle \partial_w\dot{\gamma},\xi_\tred{0} \rangle
\end{pmatrix}
\\& \hskip1.7cm + \lb \langle \PD_z \dot{\g},\xi_\tred{0}\rangle \langle \PD_w\dot{\g},\eta_\tred{0}\rangle -\langle \PD_w \dot{\g},\xi_\tred{0}\rangle \langle \PD_{z}\dot{\g}, \eta_\tred{0}\rangle\rb^2\\
&\hskip1.7cm+ 2h\Bigg{\{}\langle \PD_w\dot{\g},\xi_\tred{0}\rangle \langle \PD_z \dot{\g},\xi_\tred{0}\rangle \langle \PD_z \g,\dot{\g}\rangle \langle \PD_w \dot{\g},\eta_\tred{0}\rangle -\langle \PD_w \dot{\g},\xi_\tred{0}\rangle^2 \langle \PD_z \dot{\g}, \eta_\tred{0}\rangle \langle \PD_z \g,\dot{\g}\rangle\Bigg{\}}.
\end{split}
\end{equation}
\normalsize
According to \cite[Lemma 4.1.1]{D}, \tred{the density}
\begin{equation} \label{eq:invd}
    \tred{\frac{|p_L^* \omega|}{|\det(H\wt{\psi})|}}
\end{equation}
\tred{does not depend on the choice of $h$. We will use this fact to evaluate the principal symbol by taking the limit of this density as $h(\xi_0,\eta_0) \rightarrow \infty$.} To calculate $p_L^*\omega$, we must introduce coordinates $\{z^j\}_{j=1}^4$ on 
\[
\tred{\Lambda_J = \{(x,\xi;y,\eta) \ : \ (x,\xi;y,-\eta)\in \Cc_J \},.}
\]
Indeed, following the calculations in \cite{lan1999operator}, suppose that the coordinate vectors with respect to these coordinates are given in the natural coordinates on $T_\alpha T^* (M\times M)$ induced by the normal coordinates on $M$ near $x_0$ and $y_0$ as columns of
\[
\begin{pmatrix}
D \pi_b\\
D \pi_f
\end{pmatrix}
\]
where the upper part refers to the tangent vectors to the base while the lower part are tangent vectors to the fiber. By the calculations on \cite[Page 66]{lan1999operator}, we have
\[
|p_L^* \omega| = \left |\mathrm{det}\left (D \pi_f - 
\begin{pmatrix}
0_{2} & 0_2\\
0_2 & h\tred{(\xi,\eta)} \mathrm{I}_2
\end{pmatrix} D \pi_b \right ) \right | |dz|.
\]
If we choose the coordinates on $\Lambda_J$ to be $z^{1,2} = \xi_{1,2}$ and $z_{3,4} = y_{3,4}$ then the previous formula gives
\begin{equation}\label{eq:pLwcalc}
|p_L^* \omega| = (h\tred{(\xi,\eta)}^2 + h\tred{(\xi,\eta)} a + b) |\mathrm{d} \xi \mathrm{d} y|
\end{equation}
where $a$ and $b$ are functions which do not depend on $h$. Finally, let $h(\xi,\eta) = c |\xi|$ for a positive constant $c$. Then since \tred{\eqref{eq:invd}} does not depend on $c$, and $\xi \neq 0$, we get
\small
\begin{equation}\label{eq:vol2}
\begin{split}
& \frac{|p_L^*\omega|}{|\mathrm{det}(H\wt{\psi})|} = \lim_{c \rightarrow \infty} \frac{|p_L^*\omega|}{|\mathrm{det}(H\wt{\psi})|} = \\
&\hskip2.5cm 
\tred{|\xi|^{2}} \left [\begin{pmatrix}
    \langle \partial_z\dot{\gamma},\xi_{\tred{0}} \rangle & \langle \partial_w\dot{\gamma},\xi_{\tred{0}} \rangle
\end{pmatrix}
\begin{pmatrix}
    -|\partial_w\gamma|^2 & \langle \partial_z \gamma,\partial_w \gamma \rangle \\ \langle \partial_z \gamma,\partial_w \gamma \rangle &  \langle \partial_z \gamma, \dot{\gamma} \rangle^2 -|\partial_z\gamma|^2
\end{pmatrix}
\begin{pmatrix}
    \langle \partial_z\dot{\gamma},\xi_{\tred{0}} \rangle \\ \langle \partial_w\dot{\gamma},\xi_{\tred{0}} \rangle
\end{pmatrix}\right ]^{-1} |\mathrm{d} \xi \mathrm{d} y|.
\end{split}
\end{equation}
\normalsize
Now we use \eqref{eq:J1}, \eqref{eq:Jdot1} and the formula in between these two to reveal the invariant structure of \eqref{eq:vol2}. Indeed,
\[
\begin{split}
& \begin{pmatrix}
    \langle \partial_z\dot{\gamma},\xi_{\tred{0}} \rangle & \langle \partial_w\dot{\gamma},\xi_{\tred{0}} \rangle
\end{pmatrix}
\begin{pmatrix}
    -|\partial_w\gamma|^2 & \langle \partial_z \gamma,\partial_w \gamma \rangle \\ \langle \partial_z \gamma,\partial_w \gamma \rangle & \langle \partial_z \gamma, \dot{\gamma} \rangle^2 -|\partial_z\gamma|^2
\end{pmatrix}
\begin{pmatrix}
    \langle \partial_z\dot{\gamma},\xi_{\tred{0}} \rangle \\ \langle \partial_w\dot{\gamma},\xi_{\tred{0}} \rangle
\end{pmatrix} \\
&\hskip5cm  = \begin{pmatrix}
    \dot{b}_z(t_{\tred{0}}) & \dot{b}_w(t_{\tred{0}})
\end{pmatrix}
\begin{pmatrix}
    -b_w(s_{\tred{0}})^2 & b_z(s_{\tred{0}})b_w(s_{\tred{0}}) \\ b_z(s_{\tred{0}})b_w(s_{\tred{0}}) & -b_z(s_{\tred{0}})^2
\end{pmatrix}
\begin{pmatrix}
    \dot{b}_z(t_{\tred{0}}) \\ \dot{b}_w(t_{\tred{0}})
\end{pmatrix}\\
&\hskip5cm = - \tred{(b_z(s_{0})\dot{b}_w(t_{0})-\dot{b}_z(t_{0}) b_w(s_{0}))^2}.
\end{split}
\]
Let us label the term in parentheses in the final line above as
\[
B(t,s) = \tred{b_z(s)\dot{b}_w(t)-\dot{b}_z(t) b_w(s)}.
\]
Note that when $s = t$, $B(t,t) = W\{b_z,b_w\} =\tred{\langle \dot{\g}(0), \nu\rangle_g}$ by \eqref{eq:Wron1} \tred{and the argument leading to \eqref{eq:detHess1}}. Also, $\partial_s B(t,t) = 0$. Finally, using \eqref{eq:scalcurve1} we see
\begin{equation}\label{eq:scalcurve2}
\partial^2_s B(t,s) + \kappa(s) B(t,s) = 0.
\end{equation}
Let $f(t,s)$ be the solution of \eqref{eq:scalcurve2} with
\Beq\label{fConditions}
f(t,t) = 1, \quad \PD_s f(t,s)|_{s=t} = 0.
\Eeq
Then,
\[
B(t,s) =  \tred{\langle \dot{\g}(0), \nu\rangle_g} f(t,s)
\]
and we have now shown, recalling \eqref{eq:vol2}, that
\[
\tred{\frac{|p_L^*\omega|}{|\mathrm{det}(H\wt{\psi})|}} =  \frac{\tred{|\xi|^2_{g}}}{\tred{\langle \dot{\g}(0), \nu\rangle_g}^2 f(t_{\tred{0}},s_{\tred{0}})^2}|\mathrm{d}\xi \mathrm{d} y|.
\]
This shows the dependence on the initial angle of the geodesic with the boundary and the curvature along the geodesic between the pair of conjugate points.


We next compute the signature of the Hessian. \tred{The} calculation \tred{is} similar to what we did for the $\Psi$DO part. The coefficient of $\lambda^2$ in the characteristic polynomial of the matrix \eqref{Hpsimatrix} in terms of 
\tred{\eqref{Hpsimatrixabstract}} is 
\[
\mbox{coefficient of }\lambda^2: -a^2-b^2-c^2-d^2-f^2+eg+eh+gh.
\]
From \eqref{Hpsimatrix}, the terms involving $h^2$ in the coefficient of $\lambda^2$ are
\[
-\langle \PD_z \g,\dot{\g}\rangle^2-\langle \PD_z \g,\PD_w \g\rangle^2 + |\PD_z \g|^2|\PD_w \g|^2 + |\PD_z \g|^2 + |\PD_w \g|^2.
\]
We note that using Cauchy-Schwartz inequality, the sum above is non-negative, and in fact, it is strictly positive. Hence as before, choosing $\tred{c}$ to be large enough, we can make the coefficient of $\lambda^2$ positive. Again by Descartes' rule of signs, the characteristic polynomial must have 3 positive and one negative root and therefore the signature of $\mbox{sgn}(H\wt{\psi})=2$ in this case as well.


Finally, repeating the arguments from the section on $\Psi$DO case in this set up, we obtain the principal symbol to be \tred{given by \eqref{PS:FIO}.}

\epr

\section{Constructing a parametrix and cancellation of singularities}\label{sec:cancel}

Let us write the decomposition \eqref{eq:Ndecomp} as
\begin{equation}\label{eq:Ndecomp1}
\mathcal{N} \equiv \Psi + A
\end{equation}
where $A = \sum_{j=1}^k A_j$ is a sum of FIOs. In fact, we may assume a little bit more about the decomposition \eqref{eq:Ndecomp1} as we record in the next lemma.

\begin{lemma}\label{lem:WLOG}
It is possible to choose $\Psi$ and $A$ in \eqref{eq:Ndecomp1} which are symmetric and map solenoidal to solenoidal and potential to potential components. Furthermore, they can both be chosen to vanish on the potential part of the solenoidal decomposition.
\end{lemma}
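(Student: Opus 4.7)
The plan is to construct $\Psi$ and $A$ in two stages: first symmetrize the given decomposition, and then project onto the solenoidal component using $\pi_S$.

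First, since $\mathcal{N} = I_m^* I_m$ is manifestly self-adjoint, replacing $\Psi$ and $A$ by $\tfrac{1}{2}(\Psi + \Psi^*)$ and $\tfrac{1}{2}(A+A^*)$ preserves the identity \eqref{eq:Ndecomp1}. The symmetrized $\Psi$ remains a $\Psi$DO, and since each canonical relation $\Cc_J$ in \eqref{Lj} is invariant under swapping of the two factors (the endpoints $a$ and $b$ of the Jacobi field play symmetric roles), each symmetrized $A_j$ remains an FIO associated with the same connected component of $\Cc_J$. Thus we may assume from the outset that both $\Psi$ and $A$ are symmetric.

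Next I would show that $\mathcal{N}$ itself already vanishes on the potential part and maps every tensor field into the solenoidal subspace. The identity \eqref{eq:P1} applied at both endpoints of a geodesic gives $I_m(du) = 0$ whenever $u|_{\partial M} = 0$, so $\mathcal{N}(du) = 0$. By self-adjointness,
\[
\langle \mathcal{N} f, du \rangle = \langle f, \mathcal{N}(du) \rangle = 0
\]
for every test potential $du$, so $\mathcal{N} f$ is orthogonal to the range of $d$ and hence solenoidal. Consequently $\mathcal{N} = \pi_S \mathcal{N} \pi_S$. Define
\[
\tilde{\Psi} = \pi_S \Psi \pi_S, \qquad \tilde{A} = \pi_S A \pi_S.
\]
Then $\tilde{\Psi} + \tilde{A} = \pi_S \mathcal{N} \pi_S = \mathcal{N}$; both are symmetric since $\pi_S^* = \pi_S$; both vanish on potential fields since $\pi_S d = 0$; and both map into the solenoidal subspace by construction. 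In particular they trivially send potentials to potentials (namely to $0$) and solenoidal fields to solenoidal fields.

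It remains to verify that $\tilde{\Psi}$ is still a $\Psi$DO and $\tilde{A} = \sum_j \pi_S A_j \pi_S$ is still a sum of FIOs with the same canonical relations $\Cc_J$ as before. Using the explicit representation $\pi_S = I - d(\delta d)^{-1}\delta$, where $(\delta d)^{-1}$ denotes the solution operator for the elliptic Dirichlet boundary value problem \eqref{eq:usys}, one has that $\pi_S$ acts as a $\Psi$DO in the interior of $M$ modulo smoothing contributions coming from the boundary solve. The main obstacle in this plan is precisely the verification that conjugation by $\pi_S$ does not enlarge the wavefront set or alter the canonical relations of the FIO components $A_j$; this follows from the standard calculus of compositions together with the fact that the boundary contribution in $(\delta d)^{-1}$ is smoothing on tensor fields supported in the interior. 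Consequently $\tilde{\Psi}$ and $\tilde{A}$ retain the pseudodifferential and Fourier integral structure required for \eqref{eq:Ndecomp1}, completing the proof.
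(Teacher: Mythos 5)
Your proof is correct and follows essentially the same approach as the paper's: symmetrize using the self-adjointness of $\mathcal{N}$ and the symmetry of the canonical relations $\Cc_J$, then conjugate by the solenoidal projection $\pi_S$, using $\pi_S \mathcal{N}\pi_S = \mathcal{N}$. Your version supplies a bit more explicit detail (the duality argument giving $\mathcal{N}f$ solenoidal, and the remark that $\pi_S$ is a $\Psi$DO modulo boundary smoothing) but does not differ in substance from the paper's argument.
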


\begin{proof}
Symmetrising both sides of \eqref{eq:Ndecomp} and using the fact that $\mathcal{N}$ is symmetric, we see that $\Psi$ and $A$ can be made symmetric. Since the canonical relation of $A$, given by $\eqref{Lj}$, is symmetric the symmetrisation of $A$ remains a sum of FIOs with the same canonical relations, which are themselves symmetric. Then composing on both sides with the projection $\pi_S$ onto the solenoidal part, which is a pseudodifferential operator, and using the fact that $\pi_S \mathcal{N} \pi_S \equiv \mathcal{N}$ we see $\Psi$ and $A$ can be chosen to respect the solenoidal decomposition. Finally, since $\mathcal{N}$ vanishes on the potential part of the solenoidal decomposition, $\Psi$ and $A$ can be chosen to do so as well by considering the projection of \eqref{eq:Ndecomp1} onto the potential part.
\end{proof}

As shown in the concluding part of \S \ref{sec:PDO}, $\Psi$ is elliptic on solenoidal tensors and specifically the operator
\begin{equation} \label{eq:C}
C = \Psi + \D \Lambda \delta
\end{equation}
is elliptic where $\delta$ is the divergence defined with respect to the metric $g$ and $\Lambda$ is a scalar symmetric elliptic $\Psi$DO of order $-3$. To avoid problems with the boundary, we will consider a pseudodifferential parametrix for $C$ within an open manifold $\widetilde{M}$ compactly contained in the interior of $M$. For example, we can take $\widetilde{M}$ to be the set which is at least a distance $\epsilon$ from $\partial M$. Given such an $\widetilde{M}$, let $\varphi \in C_c^\infty(M)$ be equal to one on $\widetilde{M}$. In the following a {\it parametrix} of $C$ is an operator $T$ such that
\begin{equation}\label{eq:parametrix}
T C = \varphi (\mathrm{Id} + K)
\end{equation}
where $K$ is an operator on $M$ with smooth Schwartz kernel. We also will use the notation
\begin{equation} \label{eq:sim}
T C \equiv \mathrm{Id}
\end{equation}
to denote \eqref{eq:parametrix}. This notation and the term parametrix will also be used for other operators. Similar to \eqref{eq:sim}, we will write $f \equiv g$ if $f$ and $g$ are equal modulo a smooth function on $M$.

Next we show how a parametrix for $C$ behaves when acting on $\mathcal{N}$. The proof follows the method used in \cite{chappa2006characterization} for the case without conjugate points.

\begin{lemma}\label{lem:para}
There exists a symmetric parameterix $T$ for $C$ which respects the solenoidal decomposition. Also, we have
\begin{equation} \label{eq:Nmpara}
T\mathcal{N} \equiv \pi_S + TA.
\end{equation}
\end{lemma}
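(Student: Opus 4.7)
The plan is first to build the parametrix $T$ by combining a standard elliptic $\Psi$DO parametrix construction with symmetrization and projection onto the solenoidal--potential block decomposition, and then to derive the identity $T\mathcal{N}\equiv \pi_S + TA$ by directly computing $T\Psi$.

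For the construction, I first note that by Lemma \ref{lem:WLOG}, $\Psi$ is elliptic on solenoidal tensor fields and vanishes on potential ones, while $\D\Lambda\delta$ vanishes on solenoidal tensor fields and is elliptic on the potential subspace (since $\Lambda$ is elliptic of order $-3$ and $\delta\D$ is elliptic on the complementary quotient). Therefore $C$ is an elliptic $\Psi$DO of order $-1$ which is block--diagonal with respect to the decomposition $\mathrm{Id} = \pi_S + \pi_P$, with $\pi_P := \mathrm{Id}-\pi_S$. Standard elliptic parametrix theory yields a $\Psi$DO $T_0$ of order $+1$ with $T_0 C \equiv \varphi(\mathrm{Id}+K_0)$. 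I would then replace $T_0$ by its symmetrization $\tfrac{1}{2}(T_0+T_0^*)$ and conjugate by the projections, setting
\[
T = \pi_S\,\tfrac{1}{2}(T_0 + T_0^*)\,\pi_S + \pi_P\,\tfrac{1}{2}(T_0 + T_0^*)\,\pi_P.
\]
Because $C$ is symmetric and commutes with both $\pi_S$ and $\pi_P$, the symmetrization and block--diagonalization preserve the parametrix identity $TC \equiv \mathrm{Id}$; the resulting $T$ is symmetric and respects the solenoidal decomposition.

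For the second part, write $T\mathcal{N} = T\Psi + TA$, so it suffices to prove $T\Psi \equiv \pi_S$. By Lemma \ref{lem:WLOG}, $\Psi\pi_P = 0$, hence $\Psi = \Psi\pi_S$. Since $\D\Lambda\delta$ annihilates solenoidal tensor fields, $\D\Lambda\delta\,\pi_S = 0$, and thus $C\pi_S = \Psi\pi_S = \Psi$. Combining these,
\[
T\Psi = TC\pi_S \equiv \pi_S
\]
on $\widetilde{M}$, where in the last step we use that $\varphi = 1$ on $\widetilde M$ and that $K_0 \pi_S$ is smoothing. Adding $TA$ to both sides gives $T\mathcal{N} \equiv \pi_S + TA$, which is \eqref{eq:Nmpara}.

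The main technical point is ensuring that the projection $\pi_S$ behaves well enough as a pseudodifferential operator for the compositions and block--diagonalization above to remain valid within the $\Psi$DO calculus modulo smoothing. Since $\pi_S$ is defined via the elliptic boundary value problem \eqref{eq:usys}, it is microlocally a $\Psi$DO of order $0$ in the interior, and working on $\widetilde M \ssubset M^{\mathrm{int}}$ avoids the boundary subtleties. With this in hand, the composition identities all remain within the $\Psi$DO calculus and the equivalences modulo smoothing are routine.
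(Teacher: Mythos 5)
Your proof is correct but follows a genuinely different route from the paper's. The paper adapts the argument of Chappa's Proposition 3: it introduces the auxiliary operator $B = \mathcal{N} + \D\Lambda\delta$ and the Dirichlet solution operator $S_D$ for $\delta \D u = f$, $u|_{\partial M} = 0$, rewrites $\D\Lambda = (B - \mathcal{N})\D S_D$, substitutes back, uses $\mathcal{N}\D S_D\delta = I_m^* I_m \D S_D \delta = 0$ to reach $C + A \equiv \mathcal{N} + (C+A)\D S_D\delta$, and then applies $T$ and identifies $\pi_S = \mathrm{I} - \D S_D\delta$. Lemma \ref{lem:WLOG} is only invoked at the very end, to kill the residual term $TA\D S_D\delta$. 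Your argument bypasses $B$ and $S_D$ entirely: you exploit Lemma \ref{lem:WLOG} at the outset to write $\Psi = \Psi\pi_S$, combine this with the exact identity $\delta\pi_S = 0$ to get $C\pi_S = \Psi$, and then conclude $T\Psi = TC\pi_S \equiv \pi_S$. This is shorter and makes the role of Lemma \ref{lem:WLOG} more transparent. The trade-off is that the paper's route makes the link to the classical simple-manifold parametrix construction of Chappa explicit, which the authors evidently wished to stress. Both versions are equally informal about the boundary cutoff $\varphi$ and about the precise sense in which $\pi_S$ fits into the interior $\Psi$DO calculus, so your approach loses nothing there; your explicit block-diagonalized construction of $T$ via $\pi_S T_1 \pi_S + \pi_P T_1 \pi_P$ is also more detailed than the paper's one-line assertion, though strictly one should note that for the block-diagonalized operator to remain a parametrix one uses that $C$ is symmetric so that a left parametrix is automatically a two-sided one, and that the commutators of $\pi_S$ with $\varphi$ are negligible on $\widetilde{M}$.
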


\begin{proof}
First, note that $C$ is symmetric and respects the solenoidal decomposition by \eqref{eq:C} and Lemma \ref{lem:WLOG}. Because of this, as a parametrix for $C$, $T$ can be chosen with the same properties. 

To prove \eqref{eq:Nmpara} we follow the proof of \cite[Proposition 3]{chappa2006characterization} but with necessary changes to include the possibility of conjugate points. Let us first define the operator
\begin{equation}\label{eq:B}
B = \mathcal{N} + \D \Lambda \delta
\end{equation}
and let $S_D$ be the solution operator for the Dirichlet problem
\[
\delta d u = f, \quad
u|_{\partial M} = 0
\]
taking $f$ to $u$. Then applying $\D S_D$ on the right we have
\[
\D \Lambda = (B - \mathcal{N}) \D S_D
\]
and substituting this into \eqref{eq:B} using 
\eqref{eq:Ndecomp} yields
\[
\Psi + A + \D \Lambda \delta \equiv \mathcal{N} + (\Psi + A + \D \Lambda \delta-\mathcal{N}) \D S_D \delta.
\]
Next, note that $\mathcal{N} \D S_D \delta = I^* I \D S_D \delta = 0$ and so, using also \eqref{eq:C},
\[
C + A \equiv \mathcal{N} + (C + A) \D S_D \delta.
\]
Applying $T$ to the previous equation and rearranging yields
\[
T \mathcal{N} \equiv \mathrm{I} - d S_D\delta + TA  - T A d S_D \delta.
\]
The last term on the right side of the last equation is zero by Lemma \ref{lem:WLOG}, and since $\mathrm{I} - d S_D \delta = \pi_S$ we obtain the conclusion \eqref{eq:Nmpara}.
\end{proof}

Armed with Lemma \ref{lem:para}, we now present a result on cancellation of singularities which can occur in the presence of conjugate points. This is a version of \cite[Theorem 4.3]{monard2015geodesic} for the case of tensor fields. The statement uses microlocal Sobolev spaces which we first define as follows.

\begin{definition}\label{def:microS}
    Let $V \subset T^*M$ be an open conic set and $s \in \mathbb{R}$. Then $f \in \mathcal{D}'(M)$ is in $H^s(V)$ if for any function $\phi \in C^\infty(M)$ which is homogeneous of degree $1$ in the fiber variable and whose support is contained in $V$, $\phi(x,D)f \in H^s(M)$. The same definition applies for $f \in \mathcal{D}'(S^m\tau'_M)$, in which case we use the notation $H^s(V,S^m\tau'_M)$.
\end{definition}

We now give the cancellation of singularities result.

\begin{theorem} \label{thm:Canc}
Suppose that $\gamma_0$ is a geodesic in $M$ with  conjugate points $x_0$ and $y_0 \in M^{\rm int} \cap \gamma_0$. Let $V_i \subset T^*M^{\rm int}$, $i = 1$, $2$, be sufficiently small conical neighbourhoods around the wavefront elements in $\Lambda_J$, see \eqref{Lj}, corresponding to the conjugate pair and $\mathcal{V} \subset T^* \partial_+ SM$ a sufficiently small conical neighbourhood around $\dot{\gamma}_0 \cap \partial_+ SM$. For $i = 1$ or $2$, let $f_i \in \mathcal{D}'(S^m \tau'_{M})$ with $\mathrm{WF}(f_i) \subset V_i$. Then
\[
I_m(f_1+f_2) \in H^s(\mathcal{V})
\]
if and only if
\begin{equation}\label{eq:HsV1}
\pi_S f_1 + T A f_2 \in H^{s-1/2}(V_1,S^m \tau'_M).
\end{equation}
The previous statement is also true with $f_1$ and $f_2$ switched.
\end{theorem}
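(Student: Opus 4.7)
The plan is to establish the stated equivalence via a chain of microlocal iffs of the form
\[
I_m(f_1+f_2)\in H^s(\mathcal{V})\; \Longleftrightarrow\; \mathcal{N}_m(f_1+f_2)\in H^{s+1/2}(V_1)\; \Longleftrightarrow\; T\mathcal{N}_m(f_1+f_2)\in H^{s-1/2}(V_1)\; \Longleftrightarrow\; \pi_S f_1 + TAf_2\in H^{s-1/2}(V_1).
\]
The switched version of the theorem then follows by running the same chain with the roles of $V_1$ and $V_2$ interchanged.

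The rightmost equivalence would follow from the parametrix identity of Lemma~\ref{lem:para} applied to $f_1+f_2$:
\[
T\mathcal{N}_m(f_1+f_2) \equiv \bigl(\pi_S f_1 + TAf_2\bigr) + \bigl(\pi_S f_2 + TAf_1\bigr) \pmod{C^\infty}.
\]
Microlocally in $V_1$ the second parenthesis is smooth, since $\mathrm{WF}(\pi_S f_2)\subset V_2$ (because $\pi_S$ is a $\Psi$DO preserving wavefronts) and $\mathrm{WF}(TAf_1)\subset V_2$ (because $T$ is a $\Psi$DO and the canonical relation $\Lambda_J$ of $A$, given by \eqref{Lj}, swaps $V_1$ and $V_2$). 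The middle equivalence uses that $T$ is a $\Psi$DO of order $+1$ and $C$ is an elliptic $\Psi$DO of order $-1$ (by the ellipticity of $\Psi_m$ on solenoidal tensors from Theorem~\ref{Thm:PS:PsiDO}) with $CT\equiv\mathrm{Id}$ microlocally on $\widetilde{M}$, so applying $T$ or $C$ shifts the microlocal Sobolev index by $\mp 1$.

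For the leftmost equivalence I will use that $I_m^*$ is a microlocally elliptic FIO of order $-1/2$ whose canonical relation $\mathcal{C}_{I_m^*}$ is a local diffeomorphism by \cite[Theorem~3.2]{monard2015geodesic}. The forward direction is immediate, since $I_m^*$ gains $1/2$ derivative in microlocal Sobolev regularity and $V_1\subset \mathcal{C}_{I_m^*}(\mathcal{V})$. For the converse, my plan is to construct, via the standard stationary-phase parametrix method, an FIO $G$ of order $+1/2$ whose canonical relation is the branch of $\mathcal{C}_{I_m^*}^{-1}$ from $V_1$ to $\mathcal{V}$, so that $GI_m^* \equiv \mathrm{Id}$ on distributions with wavefront set contained in $\mathcal{V}$. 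Because $\mathrm{WF}(I_m(f_1+f_2))\subset \mathcal{V}$ by the wavefront assumption on $f_1,f_2$ together with the canonical relation of $I_m$, we would have $G\mathcal{N}_m(f_1+f_2)=GI_m^*I_m(f_1+f_2)\equiv I_m(f_1+f_2)$ modulo smoothing, and the order count on $G$ then yields the claimed $H^s(\mathcal{V})$ regularity.

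The hard part will be the parametrix construction in the last step: because $\mathcal{C}_{I_m^*}$ is multi-sheeted globally (a single covector in $\mathcal{V}$ lifts to several points along $\gamma_0$, including both members of the conjugate pair), one must shrink $\mathcal{V}, V_1, V_2$ sufficiently so that the restriction of $\mathcal{C}_{I_m^*}$ to the relation between $\mathcal{V}$ and $V_1$ is a bijective local diffeomorphism with elliptic principal symbol. The parametrix $G$ can then be built with canonical relation supported only on this $V_1$-branch; the $V_2$-branch of $\mathcal{C}_{I_m^*}$ contributes a smoothing term to $GI_m^*$ because $\mathcal{C}_G$ and $\mathcal{C}_{I_m^*|_{V_2}}$ have disjoint domains of composition, so the composition $GI_m^*$ becomes microlocally the identity on distributions with wavefront set in $\mathcal{V}$ rather than only on half of them.
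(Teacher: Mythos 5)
Your chain-of-equivalences argument matches the paper's proof in its essential ingredients: the parametrix identity $T\mathcal{N}_m \equiv \pi_S + TA$ of Lemma \ref{lem:para}, the wavefront localization that makes the cross-terms $\pi_S f_2 + TA f_1$ smooth microlocally on $V_1$, and the local-graph (Bolker) property of $\Cc_{I_m^*}$ from \cite[Theorem 3.2]{monard2015geodesic}. The only variation is in how $I_m^*$ is microlocally inverted at the end: the paper composes on the left to form the scalar $\Psi$DO $(\tilde\phi_1 I_m^*)^*(\tilde\phi_1 I_m^*)$ on $\partial_+SM$ and applies a $\Psi$DO parametrix, whereas you construct a direct one-sided FIO parametrix $G$ for the tensor-valued FIO $I_m^*$ — both are valid and rely on the same microlocal facts, though the paper's normal-operator route sidesteps the need to carry out the amplitude construction for a bundle-valued FIO.
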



\begin{proof}
First, suppose that $V_1$ and $V_2$ are taken sufficiently small so that $C_I V_1$ and $C_I V_2 \subset \mathcal{V}$. Then we have
\begin{equation}\label{eq:WFI}
WF(I_m(f_1+f_2)) \subset C_I V_1 \cup C_I V_2 \subset \mathcal{V}.
\end{equation}
Now, suppose that
\[
I_m (f_1 +f_2) = h.
\]
Then
\[
T \mathcal{N} (f_1 + f_2) = TI^* h,
\]
and by \eqref{eq:Nmpara}
\begin{equation} \label{eq:CanId1}
\pi_S f_1 + T A f_2 + \pi_S f_2 + T A f_1 \equiv T I_m^* h.
\end{equation}
Now we will microlocalise \eqref{eq:CanId1}. Indeed, let us take a second pair of cones $W_i$ slightly larger that $V_i$ and let, for $i = 1$ or $2$, $
\phi_i \in C^\infty(T^*M)$ equal to $1$ on $W_i$ and with support in a conic neighbourhood of $W_i$. Then $f_i \equiv \phi_i(x,D) f_i$ and in the following we will abuse notation for the action of $\phi_i(x,D)$ on $f_i$ by simply writing $\phi_i f_i$. The conjugate pair $x_0$ and $y_0$ is associated with one component of the set \eqref{Lj} corresponding with operator $A_j$, and so we obtain
\begin{equation} \label{eq:CanId2}
\phi_1 \pi_S \phi_1 f_1 + \phi_1 T A_j \phi_2 f_2 \equiv \phi_1 T I_m^* h.
\end{equation}
Now suppose that $h \in H^s(\mathcal{V})$. Then since $I^*$ is an FIO of order $-1/2$ whose cannonical relation is a local graph and $T$ is a pseudodifferential operator of order $1$, $\phi_1 T I_m^* h \in H^{s-1/2}(V_1,S^m \tau'_M)$ which proves the ``only if'' part of the statement.
Assume now \eqref{eq:HsV1} and that $\tilde{\phi}_1 \in C^\infty(T^*M)$ is a symbol of order $0$ equal to $1$ on $V_1$ with support contained in $W_1$. Then by \eqref{eq:HsV1} we have that the left-hand-side of \eqref{eq:CanId2} is in $H^{s-1/2}(V_1,S^m\tau'_M)$. If we apply $I_m \tilde{\phi}_1^2 C$ to \eqref{eq:CanId2}, by the mapping properties of $I_m$ and $C$ we find that the resulting left-hand-side will be in $H^{s+1}(\mathcal{V})$. Therefore, applying $I_m \tilde{\phi}_1^2 C$ to the right-hand-side of \eqref{eq:CanId2} we obtain
\[
H^{s+1}(\mathcal{V}) \ni I_m \tilde{\phi}_1^2 C \phi_1 T I_m^* h \equiv I_m \tilde{\phi}^2_1[\phi_1,C] T I_m^* h + I_m\tilde{\phi}_1^2 I_m^*h.
\]
Considering the supports of $\tilde{\phi}_1$ and $\phi_1$, we see that the first term on the right side above is smoothing and so
\begin{equation}\label{eq:CanId3}
H^{s+1}(\mathcal{V}) \ni I_m \tilde{\phi}_1^2 C \phi_1 T I_m^* h \equiv I_m \tilde{\phi}_1^2 I_m^*h \equiv (\tilde{\phi}_1I_m^*)^* (\tilde{\phi}_1I_m^*) h
\end{equation}
Suppose that $\psi \in C^\infty(T^*\partial_+ SM)$ is a symbol of order $0$ equal to one on $\mathcal{V}$ and supported in a slightly larger cone. By \eqref{eq:WFI}, $\psi(x,D) h \equiv h$ and \eqref{eq:CanId3} implies 
\[
\psi I_m \tilde{\phi}_1^2 I_m^* \psi h \in H^{s+1}(\mathcal{V}) = (\tilde{\phi}_1^*I_m^* \psi^*)^* (\tilde{\phi}_1I_m^* \psi) h.
\]
From the discussion after \eqref{eq:CI*}, $\tilde{\phi}_1 I_m^* \psi(x,D)$ satisfies the Bolker condition if $\mathcal{V}$ is sufficiently small and therefore $\mathcal{N}_{\tilde{\phi}_1^*I_m^*\psi^*} = (\tilde{\phi}_1^*I_m^*\psi^*)^* (\tilde{\phi}_1I_m^*\psi)$ is a pseudodifferential operator of order $-1$. Furthermore, since the canonical relation of $\tilde{\phi}_1^*I^*\psi^*$ is a graph (see \cite[Theorem 3.2]{monard2015geodesic}), we can apply the FIO composition calculus to determine the principal symbol of $\mathcal{N}_{\phi I_m^* \varphi}$ which, considering the definition \eqref{eq:I*}, will be a non-vanishing function multiplied by $|\xi|^{2m} = 1$  on $C_{I_m}V_1$ (note $\xi$ is defined through application of $C_{I_m^*}$ to points in this set). Thus the principal symbol of $\mathcal{N}_{\tilde{\phi}_1^*I^*\psi^*}$is non-vanshing on $C_{I_m}V_1$ and so this operator will be elliptic on that set. Thus there is a local parametrix for $(\tilde{\phi}_1I_m^*)^* (\tilde{\phi}_1I_m^*)$ on $C_IV_1$, and applying this to \eqref{eq:CanId3} completes the proof that $h \in H^s(\mathcal{V})$.
\end{proof}

Similar to \cite{monard2015geodesic} for the scalar case, we also have the following corollary showing that cancellation of singularities can actually occur.

\begin{corollary} \label{cor:Canc}
If there is a geodesic with a conjugate pair in $M$, then there exists solenoidal $f \in \mathcal{D}'(S^m \tau'_M)$ with nonempty wavefront set such that $I f \in C^\infty(\partial_+ SM)$. 
\end{corollary}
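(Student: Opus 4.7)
The plan is to build $f = f_1+f_2$ meeting the hypothesis of Theorem \ref{thm:Canc}, with $f_2$ carrying a prescribed wavefront element at one conjugate point and $f_1 := -T A f_2$ carrying the partner singularity at the other. The cancellation will be purely algebraic: Lemma \ref{lem:WLOG} gives $A = \pi_S A \pi_S$ and Lemma \ref{lem:para} makes $T$ respect the solenoidal decomposition, so $TA f_2$ is automatically solenoidal; consequently $\pi_S f_1 + T A f_2 \equiv 0$, which trivially verifies the hypothesis of Theorem \ref{thm:Canc}.

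Concretely, fix a geodesic $\gamma_0$ with a conjugate pair $x_0, y_0 \in M^{\rm int}$, choose any $(y_0, \eta) \in T^*_{y_0}M \setminus 0$, and let $(x_0, \xi) \in T^*_{x_0}M \setminus 0$ be its partner determined by the appropriate component of $\Cc_J$ from \eqref{Lj}. Take conic neighbourhoods $V_1, V_2, \mathcal{V}$ as in Theorem \ref{thm:Canc}, small enough that $V_1 \cap V_2 = \emptyset$. First I would construct a solenoidal $f_2 \in \mathcal{E}'(S^m\tau'_{M^{\rm int}})$ with $(y_0, \eta) \in \mathrm{WF}(f_2) \subset V_2$ by setting $f_2 = \pi_S(h \Phi)$, where $h$ is a scalar conormal distribution compactly supported near $y_0$ with $\mathrm{WF}(h) = \{(y_0, t\eta) : t>0\}$, and $\Phi \in C^\infty(S^m\tau'_M)$ is chosen so that $\Phi(y_0)$ is not annihilated by the principal symbol of $\pi_S$ at $(y_0, \eta)$. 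Setting $f_1 := -T A f_2$ and $f := f_1+f_2$, the discussion above makes $f$ solenoidal; and because the canonical relation of $T A$ is the relevant component of $\Cc_J$, shrinking $V_2$ if necessary gives $\mathrm{WF}(f_1) \subset V_1$, so $V_1 \cap V_2 = \emptyset$ forces $(y_0, \eta) \in \mathrm{WF}(f)$, i.e. $f$ is genuinely singular.

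The cancellation identity $\pi_S f_1 + T A f_2 = -\pi_S(T A f_2) + T A f_2 = 0$ lies in $H^{s-1/2}(V_1, S^m\tau'_M)$ for every $s \in \R$, so Theorem \ref{thm:Canc} yields $I_m f \in H^s(\mathcal{V})$ for every $s$, that is, microlocal smoothness on $\mathcal{V}$. To promote this to $I_m f \in C^\infty(\partial_+ SM)$, I would invoke the FIO nature of $I_m$: since $\mathrm{WF}(f) \subset V_1 \cup V_2$ and the canonical relation $\Cc_{I_m}$ from \eqref{eq:CI} maps $V_1 \cup V_2$ into $\mathcal{V}$ by the construction of $\mathcal{V}$, we get $\mathrm{WF}(I_m f) \subset \mathcal{V}$, which combined with the previous sentence forces $\mathrm{WF}(I_m f) = \emptyset$.

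The hardest step is the construction of $f_2$: one must verify that the solenoidal projection does not smooth away the chosen singularity, i.e. that the principal symbol of $\pi_S$ at $(y_0, \eta)$ does not annihilate $\Phi(y_0)$. This is a transversality condition against the potential principal symbol subspace, which in two dimensions has dimension $m$ inside the $(m+1)$-dimensional fiber of $S^m\tau'_M$, so a generic smooth tensor field $\Phi$ with $\Phi(y_0)$ having nonzero component in the one-dimensional orthogonal complement works. Once this is handled, the rest reduces to the structural identity $TA = \pi_S T \pi_S A$ from Lemmas \ref{lem:WLOG} and \ref{lem:para} together with the standard wavefront calculus for FIOs.
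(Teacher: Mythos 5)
Your construction mirrors the paper's, with one significant gap: you set $f_1 := -TAf_2$ using the \emph{full} sum $A = \sum_j A_j$, whereas the paper defines $f_1 = -TA_jf_2$ using only the single FIO component $A_j$ whose canonical relation links $(y_0,\eta)$ to $(x_0,\xi)$. Your version has the pleasant feature that $\pi_S f_1 + TAf_2 = 0$ exactly, but it sacrifices the hypothesis $\mathrm{WF}(f_1)\subset V_1$ that Theorem~\ref{thm:Canc} requires. The claim that ``because the canonical relation of $TA$ is the relevant component of $\Cc_J$, shrinking $V_2$ if necessary gives $\mathrm{WF}(f_1)\subset V_1$'' is not correct: the canonical relation of $A$ is the union of \emph{all} components of $\Cc_J$, one for each order of conjugate pair along a geodesic. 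If $y_0$ has more than one conjugate point along $\gamma_0$ (which the hypothesis of the corollary does not exclude), then $\mathrm{WF}(TAf_2)$ contains pieces near each of these conjugate points. Shrinking $V_2$ shrinks those pieces but does not move them into $V_1$, so $\mathrm{WF}(f_1)\not\subset V_1$ in general. This also undermines the final step, which relies on $\mathrm{WF}(f)\subset V_1\cup V_2$.

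The paper's route avoids this by taking $f_1 = -TA_j f_2$, which \emph{does} satisfy $\mathrm{WF}(f_1)\subset V_1$ since $C_{A_j}V_2\subset V_1$ after shrinking. The price is that $\pi_S f_1 + TAf_2 = \sum_{i\neq j}TA_if_2$ is no longer identically zero; the paper then argues that for $i\neq j$, $\mathrm{WF}(A_if_2)\cap V_1 = \emptyset$ (the other components carry $V_2$ to conjugate points distinct from $x_0$), so $\sum_{i\neq j}TA_if_2$ is microlocally smooth on $V_1$ and the hypothesis \eqref{eq:HsV1} still holds for every $s$. To repair your proof, replace $A$ by $A_j$ in the definition of $f_1$ and add the observation about disjointness of the remaining $C_{A_i}V_2$ from $V_1$. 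Your construction of $f_2$ via $\pi_S(h\Phi)$ with a transversality argument on the principal symbol of $\pi_S$ is a sound variant of the paper's more concrete choice $\widetilde f_{i_1\cdots i_m}=h\delta_{i_1 2}\cdots\delta_{i_m 2}$ and is otherwise fine.
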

\begin{proof}
Let $V_1$ and $V_2$ be the same as in the statement of Theorem \ref{thm:Canc}. We will first show that there exists $f_2 \in \mathcal{D}'(S^m \tau'_M)$ which is solenoidal such that $\mathrm{WF}(f_2) \subset V_2$ is non-empty. Such a tensor field can be constructed by beginning with a scalar distribution $h$ with non-empty wavefront set contained in $V_2$ and support contained in the projection of $V_2$ onto $M$. Let us assume that $V_2$ is sufficiently small that this projection is contained in a coordinate patch, and that in these coordinates $V_2$ is a conic set around $\D\xi_1$. We continue the construction using these local coordinates to define a symmetric tensor field $\widetilde{f} \in \mathcal{D}'(S^m\tau_M')$ by
\begin{equation} \label{eq:wtf}
\widetilde{f}_{i_1 \cdot i_m} = \left \{
\begin{array}{cl}
    h, & \mbox{if $i_1 = \cdots = i_m = 2$.} \\
    0, & \mbox{otherwise.} 
\end{array} \right .
\end{equation}
Finally, let $f_2$ be the solenoidal part, as in Theorem \ref{thm:Solenoidal}, of $\widetilde{f}$. The map $\pi_s$ from symmetric tensor to its solenoidal component defined by Theorem \ref{thm:Solenoidal} is a pseudodifferential operator, and formula (2.6.3) from \cite[Lemma 2.6.1]{sharafutdinov1997integral} gives the principal symbol $\sigma_p(\pi_s)$. Using that formula with \eqref{eq:wtf} we have that
\[
\sigma_p(\pi_s)(\wt{f})(\D \xi_1)_{1 \cdots 1} = h.
\]
Therefore $WF(f_2)$ is not empty and contained in $V_2$.

Now, by Lemmas \ref{lem:WLOG} and \ref{lem:para}, we see that $TA_j f_2$ is solenoidal and so we can define $f_1 = - TA_j f_2$ which is solenoidal, and considering the canonical relation of $A_j$, has wavefront set contained in $V_2$. Considering the canonical relations of the other operators $A_i$ making up $A$ in \eqref{eq:Ndecomp1}, we also see that for $i \neq j$, $WF(A_i f_2)$ does not intersect $V_1$. Now set $f = f_1 + f_2$. Then, since $\pi_S f_1 + TA_j f_2 = 0 \in H^{s-1/2}(V_1,S^m\tau'_M)$ for all $s$, by Theorem \ref{thm:Canc} we see that $I_mf \in H^s(\mathcal{V})$ for all $s$. Since, considering the canonical relation of $I$, $\mathrm{WF}(I f) \subset \mathcal{V}$, this implies that $I_mf \in C^\infty(\partial_+ SM)$.
\end{proof}
\section*{Acknowledgments} 

Both authors would like to thank the Isaac Newton Institute for Mathematical Sciences, Cambridge, UK, for support and hospitality during the workshop, \emph{Rich and Nonlinear Tomography - a multidisciplinary approach} in 2023 where this work was initiated (supported by EPSRC Grant Number EP/R014604/1). Additionally, the second author acknowledges the support of the Department of Atomic Energy,  Government of India, under
Project No.  12-R\&D-TFR-5.01-0520. The first author was supported by the Engineering and Physical Sciences Research Council (EPSRC) through grant EP/V007742/1.
\bibliographystyle{plain}
\bibliography{Bibfile}

\begin{thebibliography}{10}

\bibitem{agmon1964estimates}
Shmuel Agmon, Avron Douglis, and Louis Nirenberg.
\newblock Estimates near the boundary for solutions of elliptic partial
  differential equations satisfying general boundary conditions ii.
\newblock {\em Communications on pure and applied mathematics}, 17(1):35--92,
  1964.

\bibitem{Anikonov-Romanov}
Yu.~E. Anikonov and V.~G. Romanov.
\newblock On uniqueness of determination of a form of first degree by its
  integrals along geodesics.
\newblock {\em J. Inverse Ill-Posed Probl.}, 5(6):487--490 (1998), 1997.

\bibitem{chappa2006characterization}
Eduardo Chappa.
\newblock On the characterization of the kernel of the geodesic {X}-ray
  transform.
\newblock {\em Transactions of the American Mathematical Society},
  358(11):4793--4807, 2006.

\bibitem{dairbekov2006integral}
Nurlan~S Dairbekov.
\newblock Integral geometry problem for nontrapping manifolds.
\newblock {\em Inverse Problems}, 22(2):431, 2006.

\bibitem{D}
J.~J. Duistermaat.
\newblock {\em Fourier integral operators}, volume 130 of {\em Progress in
  Mathematics}.
\newblock Birkh\"{a}user Boston, Inc., Boston, MA, 1996.

\bibitem{guillarmou2017lens}
Colin Guillarmou.
\newblock Lens rigidity for manifolds with hyperbolic trapped sets.
\newblock {\em Journal of the American Mathematical Society}, 30(2):561--599,
  2017.

\bibitem{GS}
Victor Guillemin and Shlomo Sternberg.
\newblock {\em Geometric asymptotics}.
\newblock Mathematical Surveys, No. 14. American Mathematical Society,
  Providence, RI, 1977.

\bibitem{holman2018attenuated}
Sean Holman, Fran{\c{c}}ois Monard, and Plamen Stefanov.
\newblock The attenuated geodesic x-ray transform.
\newblock {\em Inverse Problems}, 34(6):064003, 2018.

\bibitem{HU}
Sean Holman and Gunther Uhlmann.
\newblock On the microlocal analysis of the geodesic {X}-ray transform with
  conjugate points.
\newblock {\em J. Differential Geom.}, 108(3):459--494, 2018.

\bibitem{hormanderIV}
Lars H\"{o}rmander.
\newblock {\em The analysis of linear partial differential operators. {IV}}.
\newblock Classics in Mathematics. Springer-Verlag, Berlin, 2009.
\newblock Fourier integral operators, Reprint of the 1994 edition.

\bibitem{lan1999operator}
Ih-Ren Lan.
\newblock {\em On an operator associated to a restricted x-ray transform}.
\newblock PhD thesis, Oregon State University, 1999.

\bibitem{Michel}
Ren\'{e} Michel.
\newblock Sur la rigidit\'{e} impos\'{e}e par la longueur des
  g\'{e}od\'{e}siques.
\newblock {\em Invent. Math.}, 65(1):71--83, 1981/82.

\bibitem{monard2015geodesic}
Fran{\c{c}}ois Monard, Plamen Stefanov, and Gunther Uhlmann.
\newblock The geodesic ray transform on {R}iemannian surfaces with conjugate
  points.
\newblock {\em Communications in Mathematical Physics}, 337(3):1491--1513,
  2015.

\bibitem{Mukhometov}
R.~G. Muhometov.
\newblock The reconstruction problem of a two-dimensional {R}iemannian metric,
  and integral geometry.
\newblock {\em Dokl. Akad. Nauk SSSR}, 232(1):32--35, 1977.

\bibitem{paternain2013tensor}
Gabriel~P Paternain, Mikko Salo, and Gunther Uhlmann.
\newblock Tensor tomography on surfaces.
\newblock {\em Inventiones mathematicae}, 193:229--247, 2013.

\bibitem{paternain2014tensor}
Gabriel~P Paternain, Mikko Salo, and Gunther Uhlmann.
\newblock Tensor tomography: Progress and challenges.
\newblock {\em Chinese Annals of Mathematics, Series B}, 35(3), 2014.

\bibitem{paternain2023geometric}
Gabriel~P Paternain, Mikko Salo, and Gunther Uhlmann.
\newblock {\em Geometric inverse problems}, volume 204.
\newblock Cambridge University Press, 2023.

\bibitem{Pestov-Sharafutdinov}
L.~N. Pestov and V.~A. Sharafutdinov.
\newblock Integral geometry of tensor fields on a manifold of negative
  curvature.
\newblock {\em Sibirsk. Mat. Zh.}, 29(3):114--130, 221, 1988.

\bibitem{Pestov-Uhlmann-Annals}
Leonid Pestov and Gunther Uhlmann.
\newblock Two dimensional compact simple {R}iemannian manifolds are boundary
  distance rigid.
\newblock {\em Ann. of Math. (2)}, 161(2):1093--1110, 2005.

\bibitem{pompe2003korn}
Waldemar Pompe.
\newblock Korn's first inequality with variable coefficients and its
  generalization.
\newblock {\em Commentationes Mathematicae Universitatis Carolinae},
  44(1):57--70, 2003.

\bibitem{sharafutdinov2012integral}
V.~A. Sharafutdinov.
\newblock {\em Integral geometry of tensor fields}.
\newblock Inverse and Ill-posed Problems Series. VSP, Utrecht, 1994.

\bibitem{sharafutdinov2002problem}
VA~Sharafutdinov.
\newblock A problem in integral geometry in a nonconvex domain.
\newblock {\em Siberian Mathematical Journal}, 43(6):1159--1168, 2002.

\bibitem{SharafutdinovJGA}
Vladimir Sharafutdinov.
\newblock Variations of {D}irichlet-to-{N}eumann map and deformation boundary
  rigidity of simple 2-manifolds.
\newblock {\em J. Geom. Anal.}, 17(1):147--187, 2007.

\bibitem{SSU}
Vladimir Sharafutdinov, Michal Skokan, and Gunther Uhlmann.
\newblock Regularity of ghosts in tensor tomography.
\newblock {\em J. Geom. Anal.}, 15(3):499--542, 2005.

\bibitem{sharafutdinov1997integral}
Vladimir~A Sharafutdinov.
\newblock Integral geometry of a tensor field on a surface of revolution.
\newblock {\em Siberian Mathematical Journal}, 38(3):603, 1997.

\bibitem{Stefanov_Uhlmann_book}
Plamen Stefanov and Gunther Uhlmann.
\newblock Microlocal analysis and integral geometry.
\newblock \url{https://www.math.purdue.edu/~stefanop/publications/book.pdf}.
\newblock Unpublished manuscript, accessed 26/7/2024.

\bibitem{SU-RayT-Stability}
Plamen Stefanov and Gunther Uhlmann.
\newblock Stability estimates for the {X}-ray transform of tensor fields and
  boundary rigidity.
\newblock {\em Duke Math. J.}, 123(3):445--467, 2004.

\bibitem{stefanov2004stability}
Plamen Stefanov and Gunther Uhlmann.
\newblock Stability estimates for the x-ray transform of tensor fields and
  boundary rigidity.
\newblock {\em Duke Math. J.}, 123(3):445--467, 2004.

\bibitem{SU-JAMS}
Plamen Stefanov and Gunther Uhlmann.
\newblock Boundary rigidity and stability for generic simple metrics.
\newblock {\em J. Amer. Math. Soc.}, 18(4):975--1003, 2005.

\bibitem{stefanov2008integral}
Plamen Stefanov and Gunther Uhlmann.
\newblock Integral geometry of tensor fields on a class of non-simple
  riemannian manifolds.
\newblock {\em American journal of mathematics}, 130(1):239--268, 2008.

\bibitem{SU-lens}
Plamen Stefanov and Gunther Uhlmann.
\newblock Local lens rigidity with incomplete data for a class of non-simple
  {R}iemannian manifolds.
\newblock {\em J. Differential Geom.}, 82(2):383--409, 2009.

\bibitem{SU-folds}
Plamen Stefanov and Gunther Uhlmann.
\newblock The geodesic {X}-ray transform with fold caustics.
\newblock {\em Anal. PDE}, 5(2):219--260, 2012.

\bibitem{SUV-Localtensors}
Plamen Stefanov, Gunther Uhlmann, and Andr\'{a}s Vasy.
\newblock Inverting the local geodesic {X}-ray transform on tensors.
\newblock {\em J. Anal. Math.}, 136(1):151--208, 2018.

\bibitem{SUV-Annals}
Plamen Stefanov, Gunther Uhlmann, and Andr\'{a}s Vasy.
\newblock Local and global boundary rigidity and the geodesic {X}-ray transform
  in the normal gauge.
\newblock {\em Ann. of Math. (2)}, 194(1):1--95, 2021.

\bibitem{taylor1996partial}
Michael~Eugene Taylor.
\newblock {\em Partial differential equations. 1, Basic theory}.
\newblock Springer, 1996.

\bibitem{Uhlmann-Vasy}
Gunther Uhlmann and Andr\'{a}s Vasy.
\newblock The inverse problem for the local geodesic ray transform.
\newblock {\em Invent. Math.}, 205(1):83--120, 2016.

\end{thebibliography}
\end{document}